\newcommand{\spec}{\operatorname{Spec}}
\newtheorem{thm}{\indent \sc Theorem}[section]
\newtheorem{prop}{\indent \sc Proposition}[section]
\newtheorem{cor}{\indent \sc Corollary}[section]
\newtheorem{lem}{\indent \sc Lemma}[section]
\newtheorem{defn}[thm]{Definition}
\newtheorem{rem}{\indent \sc Remark}[section]
\newtheorem{assump}[thm]{\indent \sc Assumption}
\begin{document}
\title{Hasse principle for character group of finitely
generated field over the rational number field}
\author{Makoto Sakagaito}
\date{}
\renewcommand{\thefootnote}{}
\maketitle
\begin{abstract}
In this paper, we show the Hasse principle for the character group of a finitely
generated field over the rational number field. By applying this result, we obtain an algebraic proof of unramified class field theory of
arithmetical schemes.
\end{abstract}
\section{Introduction}
The Hasse principle for the character group of a global field is known as a
classical result (\cite[p.180, 8.8 Corollary]{C-F}). The classical class field theory is used to prove this result, especially the first inequality
(\cite[p.179, 8.4 Consequence]{C-F}).
The objective of this paper is to show the Hasse principle for the character group of a finitely
generated field over the rational number field $\mathbf{Q}$, that is, to show the following results.
\begin{thm}
\upshape
(Theorem \ref{l-g-c})
\begin{enumerate}
\item
Let $k$ be a finitely generated field over $\mathbf{Q}$; $X$, a regular algebraic curve over $k$;
     and $R(X)$, the function field of $X$. Let
     $\widetilde{R(X)}_{\mathfrak p}$ be the function
     field of the Henselization of $\mathcal{O}_{X, \mathfrak p}$ for $\mathfrak p\in X\backslash(0)$. Then, the
     local-global map of the character groups
\begin{equation*}
\operatorname{H}^{1}\left(R(X), \mathbf{Q}/\mathbf{Z}\right)\to
 \prod_{\mathfrak p\in X\backslash
 (0)}\operatorname{H}^{1}\left(\widetilde{R(X)}_{\mathfrak p}, \mathbf{Q}/\mathbf{Z}\right)
\end{equation*}
is injective.
\item
Let the Kronecker dimension of $k$ be $n$. For a certain set
     of $n$-dimensional local fields $\{k_{n, i}\}_{i\in P_{n}}$,
the local-global map
\begin{equation*}
\operatorname{H}^{1}(k, \mathbf{Q}/\mathbf{Z})\to \prod_{i\in P_{n}}
\operatorname{H}^{1}(k_{n, i}, \mathbf{Q}/\mathbf{Z})
\end{equation*}
is injective.
Moreover,
\begin{equation*}
\operatorname{H}^{1}(R(X), \mathbf{Q}/\mathbf{Z})\to \prod_{i\in P_{n}} \operatorname{H}^{1}(R(X\times
 k_{n, i}), \mathbf{Q}/\mathbf{Z})
\end{equation*}
is injective.
\end{enumerate}
\end{thm}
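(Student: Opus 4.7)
The plan is to establish (i) first, since it is the core geometric statement, and then derive (ii) by induction on the Kronecker dimension $n$ of $k$, feeding (i) back into the inductive step.

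For (i), let $\chi \in \operatorname{H}^{1}(R(X), \mathbf{Q}/\mathbf{Z})$ be locally trivial. A character ramified at a closed $\mathfrak{p}\in X$ remains ramified, and hence nonzero, on the Henselian discrete valuation field $\widetilde{R(X)}_{\mathfrak{p}}$, so the hypothesis forces $\chi$ to be unramified at every closed point of $X$. Under the standard isomorphism
\[
\operatorname{H}^{1}_{\mathrm{nr}}(\widetilde{R(X)}_{\mathfrak{p}}, \mathbf{Q}/\mathbf{Z}) \cong \operatorname{H}^{1}(\kappa(\mathfrak{p}), \mathbf{Q}/\mathbf{Z}),
\]
local triviality is equivalent to $\chi|_{\kappa(\mathfrak{p})} = 0$ for every closed $\mathfrak{p}\in X$. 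To conclude $\chi = 0$, I would spread $X$ out to a regular connected arithmetic scheme $\mathcal{X}$ of finite type over $\operatorname{Spec}\mathbf{Z}$ so that, on some dense open $\mathcal{V}\subseteq \mathcal{X}$, the character $\chi$ becomes a character of $\pi_{1}^{\mathrm{ab}}(\mathcal{V})$. Every closed point of $\mathcal{V}$ lies on the Zariski closure in $\mathcal{X}$ of some closed $\mathfrak{p}\in X$, i.e.\ on a horizontal curve $D_{\mathfrak{p}}$; since the inclusion of the generic point of $D_{\mathfrak{p}}\cap \mathcal{V}$ induces a surjection on $\pi_{1}^{\mathrm{ab}}$, the vanishing on $\kappa(\mathfrak{p})$ forces $\chi$ to be trivial along $D_{\mathfrak{p}}\cap \mathcal{V}$, hence at every arithmetic closed point of $\mathcal{V}$. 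Arithmetic Chebotarev density then gives $\chi = 0$. The main obstacle is this spreading-out step together with the density statement at the arithmetic level; an alternative is to invoke higher unramified class field theory directly.

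For (ii), argue by induction on $n$. The base case $n=1$ is the classical Hasse principle for number fields. For the inductive step, write $k = R(Y)$ for a regular curve $Y$ over a field $k_{0}$ of Kronecker dimension $n-1$; applying (i) to $Y/k_{0}$ embeds $\operatorname{H}^{1}(k, \mathbf{Q}/\mathbf{Z})$ into $\prod_{\mathfrak{q}\in Y\setminus (0)} \operatorname{H}^{1}(\widetilde{k}_{\mathfrak{q}}, \mathbf{Q}/\mathbf{Z})$. Each residue field $\kappa(\mathfrak{q})$ has Kronecker dimension $n-1$, and the ramification filtration on $\operatorname{H}^{1}(\widetilde{k}_{\mathfrak{q}}, \mathbf{Q}/\mathbf{Z})$ reduces to $\operatorname{H}^{1}(\kappa(\mathfrak{q}), \mathbf{Q}/\mathbf{Z})$ together with a tame piece carried by the uniformizer. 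The inductive hypothesis injects $\operatorname{H}^{1}(\kappa(\mathfrak{q}), \mathbf{Q}/\mathbf{Z})$ into a product of $(n-1)$-dimensional local fields; completing $\widetilde{k}_{\mathfrak{q}}$ and adjoining these residue local fields yields the required $n$-dimensional local fields $k_{n,i}$. The last assertion of (ii), namely injectivity for $R(X)$ (of Kronecker dimension $n+1$) into $\prod_{i} \operatorname{H}^{1}(R(X\times k_{n,i}), \mathbf{Q}/\mathbf{Z})$, follows by applying the same step one level higher: (i) gives injectivity into the Henselizations along closed points $\mathfrak{p}\in X/k$, and the first half of (ii) applied to each $\kappa(\mathfrak{p})$ produces the local fields $k_{n,i}$ whose base change to $X$ realises the desired map.
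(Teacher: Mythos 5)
Your proposal does not actually prove part (i), and (i) is where all the content of the theorem lives. You correctly translate local triviality at every $\mathfrak p$ into ``$\chi$ unramified everywhere and trivial on every $\kappa(\mathfrak p)$,'' but then you spread out to an arithmetic model and appeal to ``arithmetic Chebotarev density'' (or, as your stated alternative, to ``higher unramified class field theory directly''), explicitly flagging this as the main obstacle. That obstacle \emph{is} the theorem: the paper's Corollary \ref{cap} (unramified class field theory of arithmetical schemes) is deduced \emph{from} Theorem \ref{l-g-c} precisely in order to replace the Chebotarev-based proof of \cite[Theorem (5.10)]{S2} by an algebraic one, so invoking that circle of results leaves nothing proved. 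The paper's actual mechanism for (i) is entirely different and is absent from your sketch: an interleaved induction on the Kronecker dimension $n$ in which (i) at level $n$ is deduced from (ii) at level $n$. Concretely, Lemma \ref{kgl} (a Hochschild--Serre descent argument) reduces the kernel $\operatorname{kgl}(P)$ over $k$ to the kernels $\operatorname{kgl}(P_i)$ over the $n$-dimensional local fields $k_{n,i}$, Lemma \ref{gr} guarantees good reduction in each step at almost all places so that these local fields exist with the required properties, and Proposition \ref{hlg} --- resting on Kato's higher local class field theory, the reciprocity for curves over higher local fields (Proposition \ref{rec}, Corollary \ref{almost2}), and the specialization isomorphism $\pi_1^{c.s.}(X)\simeq\pi_1^{c.s.}(Y)$ of Lemma \ref{inh} --- kills $\operatorname{kgl}(P_i)$ for curves with good reduction in each step. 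None of this machinery, nor any substitute for it, appears in your argument, so your plan to ``establish (i) first'' cannot be carried out as written.

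Your treatment of the first half of (ii) is closer to the paper: both argue by induction on $n$ with the number-field Hasse principle as base case, writing $k=R(Y)$ for a curve over a field of Kronecker dimension $n-1$ and feeding in (i) one level down; but you omit the good-reduction selection of Lemma \ref{gr}, which is what makes the resulting $n$-dimensional local fields usable later, and your description of the ramification filtration on $\operatorname{H}^1(\widetilde{k}_{\mathfrak q},\mathbf{Q}/\mathbf{Z})$ is only heuristic. For the last map (\ref{HasseB}) your route is also off: you try to factor it through Henselizations at closed points of $X$ and then apply (ii) to each $\kappa(\mathfrak p)$, but $R(X\times k_{n,i})$ is a horizontal base change of $R(X)$ along $k\to k_{n,i}$, not an extension of $\widetilde{R(X)}_{\mathfrak p}$, so the composite you describe does not produce the map in the statement. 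The paper instead gets (\ref{HasseB}) directly from the injectivity of (\ref{mgl}) (the diagram chase in Lemma \ref{kgl} using the injectivity of $\operatorname{H}^1(k,\mathbf{Q}/\mathbf{Z})\to\prod_i\operatorname{H}^1(k_{n,i},\mathbf{Q}/\mathbf{Z})$) combined with the exactness of the localization sequence (\ref{etx}) for a regular scheme, which lets one pass from $\operatorname{H}^1(X,\mathbf{Q}/\mathbf{Z})$ to $\operatorname{H}^1(R(X),\mathbf{Q}/\mathbf{Z})$ after shrinking $X$. You would need to repair both the logical order of the induction and this final step.
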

By applying the above results, we obtain the following:
\begin{cor}
\upshape
(Corollary \ref{cap})
Let $\mathcal{X}$ be a connected normal scheme of finite type over
 $\spec(\mathbf{Z})$ where the characteristic of $\mathcal{X}$ is $0$.
Suppose that  $i\le\operatorname{dim}(\mathcal{X})$. Then,
\begin{equation*}
\operatorname{H}^{1}(\mathcal{X}, \mathbf{Q}/\mathbf{Z})\to
\prod_{\mathfrak p\in\mathcal{X}_{(i)}}
\operatorname{H}^{1}(\kappa(\mathfrak p), \mathbf{Q}/\mathbf{Z})
\end{equation*}
is injective.
\end{cor}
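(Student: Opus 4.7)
The plan is to reduce the assertion to the codimension-one case $i = \dim(\mathcal{X}) - 1$ and then invoke Theorem~\ref{l-g-c}.

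I begin with the generic point. Since $\mathcal{X}$ is connected and normal, it is integral, and étale covers of $\mathcal{X}$ are determined by their restriction to $\operatorname{Spec} R(\mathcal{X})$, giving the injection
\[
\operatorname{H}^{1}(\mathcal{X}, \mathbf{Q}/\mathbf{Z}) \hookrightarrow \operatorname{H}^{1}(R(\mathcal{X}), \mathbf{Q}/\mathbf{Z}).
\]
This handles the base case $i = d := \dim(\mathcal{X})$ of an induction on $d - i$. For the inductive step assume the corollary at level $i + 1$ and let $\chi$ lie in the kernel at level $i$. Given $\mathfrak{q} \in \mathcal{X}_{(i+1)}$, let $\widetilde{Z}$ be the normalisation of the closed integral subscheme $\overline{\{\mathfrak{q}\}}$, so $\widetilde{Z}$ is connected normal of dimension $i+1$ with a finite morphism $\widetilde{Z} \to \mathcal{X}$, and set $\widetilde{\chi} := \chi|_{\widetilde{Z}}$. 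Every codimension-one point $\mathfrak{p}'$ of $\widetilde{Z}$ lies above some $\mathfrak{p} \in \mathcal{X}_{(i)}$ with $\kappa(\mathfrak{p}) \hookrightarrow \kappa(\mathfrak{p}')$ finite, so the hypothesis $\chi|_{\kappa(\mathfrak{p})} = 0$ forces $\widetilde{\chi}|_{\kappa(\mathfrak{p}')} = 0$. Granting the codimension-one case of the corollary for $\widetilde{Z}$, I obtain $\widetilde{\chi} = 0$, hence $\chi|_{\kappa(\mathfrak{q})} = 0$, and the inductive hypothesis at level $i+1$ finishes.

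The substance of the argument is therefore the codimension-one case. Let $\mathcal{Y}$ be as in the corollary with $\dim \mathcal{Y} = n$ and $\chi \in \operatorname{H}^{1}(\mathcal{Y}, \mathbf{Q}/\mathbf{Z})$ vanishing on $\kappa(\mathfrak{p})$ for every $\mathfrak{p} \in \mathcal{Y}_{(n-1)}$. By normality each $\mathcal{O}_{\mathcal{Y}, \mathfrak{p}}$ is a DVR, and because $\chi$ is unramified there, the standard isomorphism
\[
\operatorname{H}^{1}(\mathcal{O}^{h}_{\mathcal{Y}, \mathfrak{p}}, \mathbf{Q}/\mathbf{Z}) \cong \operatorname{H}^{1}(\kappa(\mathfrak{p}), \mathbf{Q}/\mathbf{Z})
\]
upgrades the hypothesis to $\chi|_{\widetilde{R(\mathcal{Y})}_{\mathfrak{p}}} = 0$ for every codimension-one $\mathfrak{p}$. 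The field $R(\mathcal{Y})$ is finitely generated over $\mathbf{Q}$ of Kronecker dimension $n$, so Theorem~\ref{l-g-c}(ii) supplies a set $\{k_{n, j}\}$ of $n$-dimensional local fields into which $\operatorname{H}^{1}(R(\mathcal{Y}), \mathbf{Q}/\mathbf{Z})$ injects. I choose $\{k_{n, j}\}$ so that every $k_{n, j}$ arises from a complete flag whose first step is a codimension-one point of $\mathcal{Y}$; since $\widetilde{R(\mathcal{Y})}_{\mathfrak{p}}$ embeds as the first layer of the tower building $k_{n, j}$, functoriality forces $\chi|_{k_{n, j}} = 0$ for all $j$, whence $\chi|_{R(\mathcal{Y})} = 0$ and $\chi = 0$ by the generic-point injection.

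The main obstacle is exactly this last compatibility: arranging the set $\{k_{n, j}\}$ provided by Theorem~\ref{l-g-c}(ii) so that every leading Henselisation corresponds to a codimension-one point of $\mathcal{Y}$ itself. If the paper's construction is anchored to a fixed proper model, one natural remedy is to enlarge $\mathcal{Y}$ to a normal compactification whose boundary has codimension $\ge 2$, excluding spurious boundary divisors from the product; an alternative is to fibre $\mathcal{Y}$ as a relative curve over a base of dimension $n-1$ and invoke Theorem~\ref{l-g-c}(i) on the generic fibre, iterating in the base. Once this matching is arranged, the remaining ingredients — unramified-character recovery over Henselian DVRs, pullback under finite maps, and the generic-point injection for normal schemes — are routine.
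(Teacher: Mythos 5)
The heart of this corollary is the codimension-one case, and that is exactly where your proposal stops short. Your primary route invokes Theorem \ref{l-g-c}(ii) and then needs every $n$-dimensional local field $k_{n,j}$ appearing there to begin with the Henselization at a codimension-one point of $\mathcal{Y}$; as you yourself note, you cannot guarantee this matching, since the leading discrete valuations in that construction come from closed points of an abstract proper curve model of $R(\mathcal{Y})$ over a subfield and need not be divisorial on $\mathcal{Y}$. Neither of your proposed remedies is carried out, so the key case is not established. The paper avoids the problem entirely and uses only part (i): choose an affine open $\spec(A)\subset\mathcal{X}$ and a normal subring $B$ with quotient field $K$ so that $R(\mathcal{X})$ is a function field in one variable over $K$; then $\spec(A)\otimes_{B}K$ is a normal, hence regular, curve over the finitely generated field $K$, its closed points \emph{are} codimension-one points of $\mathcal{X}$, and the hypothesis — upgraded, as you correctly do, to vanishing in $\operatorname{H}^{1}(\widetilde{R(\mathcal{X})}_{\mathfrak p},\mathbf{Q}/\mathbf{Z})$ via unramifiedness over the Henselian discrete valuation rings $\mathcal{O}_{\mathcal{X},\mathfrak p}$ — feeds directly into Theorem \ref{l-g-c}(i), which tolerates omitting finitely many closed points and does not require completeness. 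A single application gives $\chi|_{R(\mathcal{X})}=0$; this is your ``alternative'' remedy, and no iteration in the base is needed.

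Your reduction of the general $i$ to the codimension-one case is essentially the paper's (restrict to the normalization of the closure of a point, observe that its codimension-one points lie over points one level deeper, apply the codimension-one case there, and induct), but watch the indexing: the statement uses $\mathcal{X}_{(i)}$, the points of \emph{codimension} $i$, whereas your assertions that $\widetilde{Z}=\overline{\{\mathfrak q\}}^{\,\sim}$ has dimension $i+1$ and that its codimension-one points lie over $\mathcal{X}_{(i)}$, and your base case ``$i=d$ is the generic point,'' are only correct if $i$ denotes dimension. Under the convention of the statement the induction runs upward in codimension from the trivial case $i=0$ (injectivity into $\operatorname{H}^{1}(R(\mathcal{X}),\mathbf{Q}/\mathbf{Z})$ by normality), and $i=\dim\mathcal{X}$ is the terminal, hardest case rather than the base case. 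Once reindexed the step is sound, so the only substantive defect is the unproved codimension-one case above.
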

This result yields an algebraic proof of the unramified class field theory of
arithmetical schemes (\cite[p.270, Theorem (5.10)]{S2}), which is proved
by using Chebotarev's density theorem in \cite{S2}.

\section{Notation}
For a scheme $X$, $X_{(i)}$ is the set of points of codimension $i$, and
$X^{(i)}$ is the set of points of dimension $i$. We denote by
$\pi_{1}^{ab}(X)$ the abelian fundamental group of $X$. For an integer
$m>0$, we identify the etale cohomology group $\operatorname{H}^{1}(X,
\mathbf{Z}/m\mathbf{Z})$ with the group of all continuous homomorphisms
$\pi_{1}^{ab}(X)\to \mathbf{Z}/m\mathbf{Z}$.
 
For an integral scheme $X$ and $\mathfrak p\in X$, let
 $\kappa(\mathfrak p)$ be the residue field at $\mathfrak p$; $R(X)$, the function
field of $X$; $\mathcal{O}_{X,
 \mathfrak p}$, the local ring at $\mathfrak p$;
 $\widetilde{\mathcal{O}_{X, \mathfrak p}}$, the Henselization of $\mathcal{O}_{X,
 \mathfrak p}$;
 $\widetilde{R(X)}_{\mathfrak p}$, its quotient field; $\widehat{\mathcal{O}_{X, \mathfrak p}}$, the completion of $\mathcal{O}_{X,
 \mathfrak p}$; $\widehat{R(X)}_{\mathfrak p}$, its quotient field;
 $\mathcal{O}_{X, \bar{\mathfrak p}}$, the strict Henselization of $\mathcal{O}_{X,
 \mathfrak p}$; and $R(X)_{\bar{\mathfrak p}}$, its quotient field.
When an integer $m$ is invertible in $\mathcal{O}_{X}$, $\mu_{m}$ denotes
the sheaf of $m$ -th roots of unity on the etale site $X_{et}$ of $X$. For a local ring $A$,
\begin{math}
\operatorname{K}_{n}^{M}(A)
\end{math}
denotes the Milnor $K$-group of degree $n$ for $A$.
\section{Higher local class field theory and its application}
In this section, our objective is to prove Proposition \ref{hlg}, which is a
generalization of \cite[p.524,  Lemma 5.4]{JS};
it is required to prove Theorem \ref{l-g-c}.
\subsection{Higher local field theory}
In this subsection, we review higher local field theory, which plays an important role in the proof of the main
result.
\begin{defn}
\label{DH}
\upshape
A field $K$ is called an $n$-\emph{dimensional local field} if there is a
sequence of fields $k_{n}, \cdots, k_{0}$ satisfying the following
conditions:
$k_{0}$ is a finite field, $k_{i}$ is a \emph{Henselian} discrete valuation
field with residue field $k_{i-1}$ for $i=1, 2, \cdots, n$, and $k_{n}=K$.
\end{defn}
For any field $k$ of characteristic $0$, and $r\geq 1$, let
\begin{equation*}
\operatorname{H}^{r}(k)=
\lim_{\to}\operatorname{H}^{r}(k, \mu_{m}^{\otimes(r-1)})
\end{equation*}
be the Galois cohomology groups.

Then, there exists a canonical isomorphism
\begin{equation*}
\eta: \operatorname{H}^{n+1}(K)\simeq \mathbf{Q}/\mathbf{Z}.
\end{equation*}
For $0\le r\le n+1$, the canonical pairing
\begin{equation*}
\langle~, ~\rangle :
 \operatorname{H}^{r}(K)\times\operatorname{K}_{n+1-r}^{M}(K)
\to \operatorname{H}^{n+1}(K)
\end{equation*}
induces a homomorphism
\begin{equation*}
\Phi_{K}^{r}: \operatorname{H}^{r}(K)\to
\operatorname{Hom}\left(\operatorname{K}_{n+1-r}^{M}(K), \mathbf{Q}/\mathbf{Z}\right)
\end{equation*}
\begin{thm}
\upshape
\label{hlt}
\cite{K1, K3}
Let $K$ be an $n$-dimensional local field and $F$ be the $n-1$-dimensional
 local field that is the residue field of $K$. Then,
\begin{enumerate}
\item
The correspondence
\begin{equation*}
L\to \operatorname{N}_{L/K}\operatorname{K}_{n}^{M}(L)
\end{equation*}
is a bijection from the set of all finite abelian extensions of $K$ to
     the set of all open subgroups of $\operatorname{K}_{n}^{M}(K)$ of
     finite index.
\item
$\Phi^{r}_{K}$ induces an isomorphism between $\operatorname{H}^{r}(K)$
     and the group of all
 continuous characters of finite order of $\operatorname{K}_{n+1-r}^{M}(K)$
 when $0\leqq r\leqq n+1$.
\item We have the commutative diagram
\begin{equation*}
\xymatrix{
\operatorname{K}_{n}^{M}(K)\ar[r]\ar[d]_{\partial}
& \operatorname{Gal}(K^{ab}/K)\ar[d]\\
\operatorname{K}_{n-1}^{M}(F)\ar[r]
&\operatorname{Gal}(F^{ab}/F),
}
\end{equation*}
where the horizontal arrows come from the class field theory, and the
      left vertical arrow $\partial$ is the boundary homomorphism.

In particular,
      an element $\chi\in \operatorname{H}^{1}(K,
      \mathbf{Q}/\mathbf{Z})$ is unramified, that is , the corresponding
      cyclic extension of $K$ is unramified, if and only if
      $\varPhi_{K}^{1}(\chi)$ is trivial on
      $\operatorname{Ker}(\partial)$.
\end{enumerate}
\end{thm}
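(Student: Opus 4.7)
The plan is to follow Kato's original inductive argument on the dimension $n$. The base case $n=1$ is classical local class field theory for a Henselian discrete valuation field with finite residue field: the reciprocity map $K^{\times}\to\operatorname{Gal}(K^{ab}/K)$ establishes (i), Tate's local duality combined with the Brauer-group isomorphism $\operatorname{H}^{2}(K)\simeq\mathbf{Q}/\mathbf{Z}$ establishes (ii), and (iii) reduces to the standard description of the unramified quotient of $\operatorname{Gal}(K^{ab}/K)$.

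For the inductive step, fix $K$ of dimension $n\geq 2$ with residue field $F$ of dimension $n-1$, and assume the theorem for $F$. The first task is to construct a residue map $\operatorname{res}\colon \operatorname{H}^{n+1}(K)\to\operatorname{H}^{n}(F)$ from the localization sequence attached to the valuation ring of $K$, and to define $\eta_{K}$ as $\eta_{F}\circ\operatorname{res}$. The crucial projection formula
\begin{equation*}
\operatorname{res}(\chi\cup\alpha)=\operatorname{res}(\chi)\cup\partial(\alpha),\qquad \chi\in\operatorname{H}^{r}(K),\ \alpha\in\operatorname{K}_{n+1-r}^{M}(K),
\end{equation*}
intertwines the pairing on $K$ with that on $F$. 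Choosing a uniformizer $\pi$ of $K$ splits the exact sequence
\begin{equation*}
0\to\operatorname{K}_{n}^{M}(\mathcal{O}_{K})\to\operatorname{K}_{n}^{M}(K)\xrightarrow{\partial}\operatorname{K}_{n-1}^{M}(F)\to 0
\end{equation*}
of Milnor $K$-groups and produces a parallel decomposition of $\operatorname{H}^{r}(K)$ into its unramified part (isomorphic to $\operatorname{H}^{r}(F)$) and its ramified part (isomorphic to $\operatorname{H}^{r-1}(F)$ via cup product with the symbol $\{\pi\}$).

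Applying the inductive hypothesis summand by summand exhibits $\Phi_{K}^{r}$ as the direct sum of $\Phi_{F}^{r}$ and $\Phi_{F}^{r-1}$, both isomorphisms onto continuous characters of finite order of the relevant Milnor $K$-groups; this yields (ii). Statement (iii) is then immediate from the decomposition, since unramified characters are precisely those landing in the $\operatorname{H}^{1}(F)$-summand, equivalently those trivial on $\operatorname{Ker}(\partial)=\operatorname{K}_{n}^{M}(\mathcal{O}_{K})$. Statement (i) follows from (ii) with $r=1$: Galois theory identifies finite abelian extensions $L/K$ with finite subgroups of $\operatorname{H}^{1}(K,\mathbf{Q}/\mathbf{Z})$, and Pontryagin duality via the isomorphism $\Phi_{K}^{1}$ translates these to open subgroups of finite index in $\operatorname{K}_{n}^{M}(K)$; one then checks using the projection formula that the subgroup attached to $L$ is precisely $\operatorname{N}_{L/K}\operatorname{K}_{n}^{M}(L)$.

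The hard part throughout is the construction of $\eta$ together with the $p$-primary part of the theory when the residue characteristic is $p>0$. The prime-to-$p$ component follows cleanly from Bloch-Kato type vanishing applied to the above inductive scheme, but the $p$-part demands Kato's delicate analysis of symbol maps and logarithmic differentials carried out in \cite{K1,K3}; this is where essentially all of the technical weight of the theorem lies.
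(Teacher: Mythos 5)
The paper does not prove this theorem at all: it is stated as a quotation of Kato's higher local class field theory \cite{K1, K3} and used as a black box, so there is no internal argument to measure yours against. Your sketch is a fair description of the standard inductive strategy from the literature, but judged as a proof it has a genuine gap at the decisive point. The decomposition of $\operatorname{H}^{r}(K)$ into an unramified summand isomorphic to $\operatorname{H}^{r}(F)$ and a ramified summand isomorphic to $\operatorname{H}^{r-1}(F)$ (via cup product with $\{\pi\}$) is valid only for the prime-to-$p$ torsion when $\operatorname{ch}(F)=p>0$. In the mixed-characteristic situation relevant to this paper the $p$-primary part of $\operatorname{H}^{r}(K)$ is strictly larger because of wild ramification: already $\operatorname{H}^{1}(\mathbf{Q}_{p},\mathbf{Z}/p\mathbf{Z})$ has $\mathbf{F}_{p}$-dimension $2$ for odd $p$, whereas the two-summand picture over the residue field $\mathbf{F}_{p}$ predicts dimension $1$. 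Consequently ``applying the inductive hypothesis summand by summand'' cannot establish (ii) --- and with it (i) and (iii) --- for the $p$-part. You concede this in your final paragraph, but that concession means your argument proves only the tame part and defers the essential content to \cite{K1, K3}, which is precisely what the paper does by citation.

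Two smaller points. First, deducing (i) from (ii) requires more than Pontryagin duality: one must identify $\operatorname{Ker}\bigl(\Phi^{1}_{K}(\chi)\bigr)$ with the norm subgroup $\operatorname{N}_{L/K}\operatorname{K}_{n}^{M}(L)$ of the corresponding extension, and the surjectivity of $L\mapsto\operatorname{N}_{L/K}\operatorname{K}_{n}^{M}(L)$ onto \emph{all} open subgroups of finite index is the existence theorem, the subject of the separate reference \cite{K3}; it is hidden in the word ``continuous'' in (ii) and should be flagged rather than folded into ``one then checks.'' Second, the identification $\operatorname{Ker}(\partial)=\operatorname{K}_{n}^{M}(\mathcal{O}_{K})$ and the splitting of the Milnor $K$-theory sequence by a uniformizer are themselves nontrivial statements for Henselian (rather than complete) valuation rings and deserve a reference. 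None of this makes your outline wrong as a roadmap, but as a proof it is incomplete exactly where the theorem is hard.
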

%

\subsection{Class field theory of schemes over Henselian discrete valuation field}
Let $Z$ be an excellent scheme. For $x\in Z^{(i)}$ and $y\in Z^{(i+1)}$, let
\begin{equation*}
\partial_{x}^{y}: \operatorname{K}_{*+1}^{M}(\kappa(y))\to \operatorname{K}_{*}^{M}(\kappa(x))
\end{equation*}
be the following homomorphism. Let $W$ be the normalization of the
reduced scheme $\bar{\{y\}}$, and define $\partial_{x}^{y}$ as
\begin{equation*}
\partial_{x}^{y}=\sum_{x^{\prime}}\operatorname{N}_{\kappa(x^{\prime})/\kappa(x)}\circ
\partial_{x^{\prime}}.
\end{equation*}
Here, $x^{\prime}$ ranges over all points of $W$ lying over $x$,
$\partial_{x^{\prime}}$ denotes the tame symbol
\begin{equation*}
\operatorname{K}_{*+1}^{M}(\kappa(y))\to \operatorname{K}_{*}^{M}(\kappa(x^{\prime}))
\end{equation*}
associated with the discrete valuation ring
$\mathcal{O}_{Y, x^{\prime}}$ and
$\operatorname{N}_{\kappa(x^{\prime})/\kappa(x)}$ is
the norm map
\begin{equation*}
\operatorname{K}_{*}^{M}(\kappa(x^{\prime}))\to \operatorname{K}_{*}^{M}(\kappa(x)).
\end{equation*}
Then,
\begin{equation*}
\cdots\to \bigoplus_{x\in Z^{(2)}}\operatorname{K}_{n+2}^{M}(\kappa(x))\to
\bigoplus_{x\in Z^{(1)}}\operatorname{K}_{n+1}^{M}(\kappa(x))\to
\bigoplus_{x\in Z^{(0)}}\operatorname{K}_{n}^{M}(\kappa(x))
\end{equation*}
is complex by \cite{K2}.
For an integer $n$, we define
\begin{equation*}
\operatorname{SK}_{n}(Z)=\operatorname{Coker}
\left(
\partial: \bigoplus_{y\in Z^{(1)}}\operatorname{K}_{n+1}^{M}(\kappa(y))
\to \bigoplus_{x\in Z^{(0)}}\operatorname{K}_{n}^{M}(\kappa(x))
\right).
\end{equation*}
We consider schemes $\mathfrak X$, $X$, and $Y$ which satisfy the
following assumption.
\begin{assump}
\upshape
\label{ap}
\begin{enumerate}
\item[] $\mathcal{O}_{k}$ : a Henselian discrete valuation ring with
residue field $F$ and quotient field $k$,
\item[] $\mathfrak X$ : a connected regular proper
 scheme over $S=\spec(\mathcal{O}_{k})$,
\item[] $X=\mathfrak
 X\otimes_{\mathcal{O}_{k}}k$ and $Y=\mathfrak X\otimes_{\mathcal{O}_{k}}F$.
\end{enumerate}
\end{assump}
Then,
$X^{(1)}\subset\mathfrak{X}^{(2)}$, $\mathfrak{X}^{(1)}=X^{(0)}\cup Y^{(1)}$, and
$\mathfrak{X}^{(0)}=Y^{(0)}$.
Hence, we have the anti-commutative diagram
\begin{equation}
\xymatrix{
\bigoplus_{v\in X^{(1)}}\operatorname{K}_{n+1}^{M}(\kappa(v))\ar[r]\ar[d]
& \bigoplus_{u\in X^{(0)}}\operatorname{K}_{n}^{M}(\kappa(u))\ar[r]\ar[d]
& \operatorname{SK}_{n}(X)\ar[r]
& 0\\
\bigoplus_{y\in Y^{(1)}}\operatorname{K}_{n}^{M}(\kappa(y))\ar[r]
&\bigoplus_{x\in Y^{(0)}}\operatorname{K}_{n-1}^{M}(\kappa(x))\ar[r]
&\operatorname{SK}_{n-1}(Y)\ar[r]
& 0.
}
\end{equation}
Therefore, we obtain the homomorphism
\begin{equation*}
\operatorname{\partial}: \operatorname{SK}_{n}(X)\to \operatorname{SK}_{n-1}(Y).
\end{equation*}
Let $k$ be an $n$-dimensional local field. Suppose that $X$ is a smooth
proper algebraic curve over $k$. Then, we construct a homomorphism
\begin{equation*}
\tau: \operatorname{SK}_{n}(X)\to \pi^{ab}_{1}(X).
\end{equation*}
%
Let $K$ be the function field of $X$, $P=X^{(0)}$; $R_{\mathfrak p}$, the Henselization of
$\mathcal{O}_{X, \mathfrak p}$ for $\mathfrak p \in P$; and
$K_{\mathfrak p}$, the quotient field of $R_{\mathfrak p}$. Then, we define
\begin{equation*}
I_{K}=\textstyle\prod^{\prime}_{\mathfrak
 p\in P} \operatorname{K}_{n+1}^{M}(K_{\mathfrak p}),
\end{equation*}
where $\prod^{\prime}$ denotes the restricted product with respect to the subgroups
\begin{math}
\operatorname{Ker}(\partial_{\mathfrak
 p})=\operatorname{K}_{n+1}^{M}(R_{\mathfrak p})
\end{math}
for $\mathfrak p \in P$.

Let $\chi$ be an element of
$\operatorname{H}^{1}(K, \mathbf{Q}/\mathbf{Z})$ and
$\chi_{\mathfrak p}$ be the restriction of $\chi$ to
$\operatorname{H}^{1}(K_{\mathfrak p}, \mathbf{Q}/\mathbf{Z})$. Note
that $\chi_{\mathfrak p}$ is unramified for almost all $\mathfrak p\in
P$. Hence, if
$a=(a_{\mathfrak p})_{\mathfrak p\in P}$ is an element of $I_{K}$, by
Theorem \ref{rec} (iii) and the definition of $I_{K}$, we have
\begin{equation*}
\langle\chi_{\mathfrak p}, a_{\mathfrak p}\rangle_{\mathfrak p}
=\Phi_{K_{\mathfrak p}}^{1}(\chi_{\mathfrak p})(a_{\mathfrak p})
=0
\end{equation*}
for
almost all $\mathfrak p\in P$.

Consequently, we obtain a pairing
\begin{equation}
\label{pa1}
\langle~~\rangle_{K}: \operatorname{H}^{1}(K,
 \mathbf{Q}/\mathbf{Z})\otimes I_{K}\to \mathbf{Q}/\mathbf{Z},
\end{equation}
defined by
\begin{equation*}
\langle\chi, (a_{\mathfrak p})_{\mathfrak p\in P}\rangle_{K}
=\sum_{\mathfrak p\in P}\langle\chi_{\mathfrak p}, a_{\mathfrak
p}\rangle_{\mathfrak p}.
\end{equation*}
Then, we have the following result.
\begin{prop}
\label{rec}
\upshape
(c.f. \cite[p.57, II, Proposition 1.2]{S1})
Let $k$ be an $n$-dimensional local field. Let $X$ be a smooth proper curve over $k$ and let $K=R(X)$. Then, if $a=(a_{\mathfrak p})_{\mathfrak p\in P}$ is in the diagonal image of
$\operatorname{K}_{n+1}^{M}(K)$ in $I_{K}$, we have, for any
\begin{math}
\chi\in \operatorname{H}^{1}(K, \mathbf{Q}/\mathbf{Z}),
\end{math}
\begin{equation*}
\langle\chi, a\rangle_{K}=0.
\end{equation*}
\end{prop}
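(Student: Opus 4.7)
The plan is to re-express each local pairing as the invariant of a cup product in Galois cohomology, and then reduce the global vanishing to a residue reciprocity law for the proper curve $X$. Since $K_{\mathfrak p}$ is an $(n+1)$-dimensional local field, the canonical isomorphism $\eta_{K_{\mathfrak p}}:\operatorname{H}^{n+2}(K_{\mathfrak p})\simeq \mathbf{Q}/\mathbf{Z}$ exists, and by Theorem \ref{hlt} we have
\[
\langle\chi_{\mathfrak p}, a_{\mathfrak p}\rangle_{\mathfrak p} = \eta_{K_{\mathfrak p}}\bigl(\chi_{\mathfrak p}\cup h(a_{\mathfrak p})\bigr),
\]
where $h$ denotes the Galois symbol from Milnor $K$-theory. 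Moreover, $\kappa(\mathfrak p)$ is itself an $n$-dimensional local field, so $\eta_{K_{\mathfrak p}}$ factors as $\eta_{\kappa(\mathfrak p)}\circ\partial_{\mathfrak p}$, with $\partial_{\mathfrak p}:\operatorname{H}^{n+2}(K_{\mathfrak p})\to\operatorname{H}^{n+1}(\kappa(\mathfrak p))$ the residue map attached to the Henselian discrete valuation on $K_{\mathfrak p}$.

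Next, assume $a=(a_{\mathfrak p})_{\mathfrak p\in P}$ is the diagonal image of a single $\alpha\in\operatorname{K}_{n+1}^{M}(K)$, and fix $m>0$ annihilating $\chi$. The global cup product $\beta:=\chi\cup h(\alpha)\in\operatorname{H}^{n+2}(K, \mu_m^{\otimes(n+1)})$ restricts to $\chi_{\mathfrak p}\cup h(a_{\mathfrak p})$ at each $\mathfrak p\in P$, so
\[
\langle\chi, a\rangle_K=\sum_{\mathfrak p\in P}\eta_{\kappa(\mathfrak p)}\bigl(\partial_{\mathfrak p}(\beta)\bigr).
\]
The proposition is therefore equivalent to the assertion that the composition
\[
\operatorname{H}^{n+2}(K, \mu_m^{\otimes(n+1)})\xrightarrow{\oplus\partial_{\mathfrak p}} \bigoplus_{\mathfrak p\in P}\operatorname{H}^{n+1}(\kappa(\mathfrak p), \mu_m^{\otimes n}) \xrightarrow{\sum\eta_{\kappa(\mathfrak p)}}\mathbf{Q}/\mathbf{Z}
\]
vanishes on classes in the image of $\operatorname{H}^{n+2}(K, \mu_m^{\otimes(n+1)})$, i.e.\ a higher-dimensional residue reciprocity on the proper curve $X/k$.

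To establish this reciprocity I would follow the blueprint of \cite[II, Proposition 1.2]{S1}, adapted to higher local fields. By bilinearity of the cup product one may assume $\alpha=\{f_1,\ldots, f_{n+1}\}$ is a Milnor symbol with $f_i\in K^\times$; then each $\partial_{\mathfrak p}(\beta)$ is an explicit iterated tame symbol at $\mathfrak p$, and Theorem \ref{hlt}~(iii) allows one to push the local invariants along the norm maps $\operatorname{N}_{\kappa(\mathfrak p)/k}$ to $\operatorname{H}^{n+1}(k)\simeq\mathbf{Q}/\mathbf{Z}$. The resulting identity is a direct analogue of Weil reciprocity and follows from the exactness at the closed-point term of the Kato complex \cite{K2} of $X$, using that $X$ is \emph{proper} over $\spec(k)$ so that no further boundary contribution survives. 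The main obstacle is precisely this reciprocity step; once it is granted, the proposition follows immediately by summing the local formulae of the first paragraph.
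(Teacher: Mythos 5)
Your argument is correct and is essentially the proof the paper intends: the paper reduces to $\mathbf{Z}/p\mathbf{Z}$-coefficients and defers to Saito's reciprocity argument \cite[p.57, II, Proposition 1.2]{S1}, of which your write-up is the natural higher-dimensional adaptation (local pairing expressed as $\eta_{K_{\mathfrak p}}$ of a cup product, factorization through the residue to $\operatorname{H}^{n+1}(\kappa(\mathfrak p))$, and the vanishing of $\sum_{\mathfrak p}\operatorname{Cor}_{\kappa(\mathfrak p)/k}\circ\partial_{\mathfrak p}$ on $\operatorname{H}^{n+2}(K)$ forced by properness of $X$). The one quibble is terminological: the reciprocity you invoke is the covariance of Kato's complex under the proper morphism $X\to\spec(k)$ (the cohomological Weil reciprocity), not literally ``exactness at the closed-point term'' of the complex of $X$ itself, which would only give $\partial\circ\partial=0$ and does not involve the base field.
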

\begin{proof}
Let $p$ be a prime number. Then, it is sufficient to prove the statement
 for an element of
\begin{math}
\operatorname{H}^{1}(K, \mathbf{Z}/p\mathbf{Z}).
\end{math}
The proof is similar to \cite[p.57, II, Proposition 1.2]{S1}.
\end{proof}
By applying this result, we obtain a generalization of \cite[p.76, II, Theorem 7.1]{S1}.
\begin{cor}
\upshape
\label{almost2}
Let $L$ be a cyclic extension of $K$ in which almost all
 $\mathfrak p\in X^{(0)}$ split completely. Then, all $\mathfrak p\in X^{(0)}$ split
 completely in the extension.
\end{cor}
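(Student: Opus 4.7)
The plan is to mirror the proof of \cite[p.76, II, Theorem 7.1]{S1}, substituting Proposition \ref{rec} for its one-dimensional predecessor. Let $\chi\in \operatorname{H}^{1}(K,\mathbf{Q}/\mathbf{Z})$ be the character attached to the cyclic extension $L/K$. A closed point $\mathfrak p\in X^{(0)}$ splits completely in $L/K$ precisely when the local restriction $\chi_{\mathfrak p}\in \operatorname{H}^{1}(K_{\mathfrak p},\mathbf{Q}/\mathbf{Z})$ is trivial, so the hypothesis produces a finite set $S\subset X^{(0)}$ outside of which $\chi_{\mathfrak p}=0$, and the goal is reduced to showing $\chi_{\mathfrak q}=0$ for each $\mathfrak q\in S$. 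By the injectivity part of Theorem \ref{hlt}(ii), applied to the $(n+1)$-dimensional local field $K_{\mathfrak q}$, it is enough to check that $\Phi_{K_{\mathfrak q}}^{1}(\chi_{\mathfrak q})$ annihilates every symbol $b=\{b_{1},\dots,b_{n+1}\}\in\operatorname{K}_{n+1}^{M}(K_{\mathfrak q})$.

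Fix such a $b$. For each $\mathfrak p\in S$ the character $\Phi_{K_{\mathfrak p}}^{1}(\chi_{\mathfrak p})$ has finite order and, by Theorem \ref{hlt}(ii), is continuous; in particular there exist open neighbourhoods $U_{\mathfrak p}\subset K_{\mathfrak p}^{*}$ of $1$ small enough that (i) any symbol whose entries all lie in $U_{\mathfrak p}$ pairs trivially with $\chi_{\mathfrak p}$ for $\mathfrak p\in S\setminus\{\mathfrak q\}$, and (ii) replacing each $b_{i}$ by $b_{i}u_{i}$ with $u_{i}\in U_{\mathfrak q}$ does not alter $\langle\chi_{\mathfrak q},b\rangle_{\mathfrak q}$. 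Weak approximation for the function field $K$ at the finite set of inequivalent places $S$ then furnishes $\tilde b_{i}\in K^{*}$ with $\tilde b_{i}b_{i}^{-1}\in U_{\mathfrak q}$ and $\tilde b_{i}\in U_{\mathfrak p}$ for every $\mathfrak p\in S\setminus\{\mathfrak q\}$. Since each $\tilde b_{i}$ is a unit at all but finitely many closed points, the symbol $\tilde b=\{\tilde b_{1},\dots,\tilde b_{n+1}\}\in\operatorname{K}_{n+1}^{M}(K)$ maps diagonally into $I_{K}$.

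Proposition \ref{rec} applied to $\chi$ and $\tilde b$ now gives
\begin{equation*}
0=\sum_{\mathfrak p\in P}\langle\chi_{\mathfrak p},\tilde b_{\mathfrak p}\rangle_{\mathfrak p}=\langle\chi_{\mathfrak q},b\rangle_{\mathfrak q},
\end{equation*}
since the terms with $\mathfrak p\notin S$ vanish by hypothesis, the terms with $\mathfrak p\in S\setminus\{\mathfrak q\}$ vanish by condition (i), and the $\mathfrak q$-term equals $\langle\chi_{\mathfrak q},b\rangle_{\mathfrak q}$ by condition (ii). As $b$ was an arbitrary symbol and symbols generate $\operatorname{K}_{n+1}^{M}(K_{\mathfrak q})$, we conclude $\Phi_{K_{\mathfrak q}}^{1}(\chi_{\mathfrak q})=0$, so $\chi_{\mathfrak q}=0$, as required. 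The most delicate point is to ensure that the open subgroups produced by the continuity of $\Phi_{K_{\mathfrak p}}^{1}(\chi_{\mathfrak p})$ in the higher local-field topology on $\operatorname{K}_{n+1}^{M}(K_{\mathfrak p})$ are coarse enough to be met by symbols whose entries approximate prescribed elements in the ordinary valuation topology on $K_{\mathfrak p}^{*}$; this is the step where the finite order of $\chi$ is essential, and it is the only ingredient beyond Proposition \ref{rec} and classical weak approximation.
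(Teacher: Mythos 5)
Your argument is correct and follows essentially the same route as the paper: weak approximation on the symbol entries at the finite set of non-split points, the fact that the local characters kill $U^{i}\operatorname{K}_{n+1}^{M}(K_{\mathfrak p})$ for $i$ large (norm subgroups are open of finite index, Theorem \ref{hlt}(i),(ii)), and the reciprocity of Proposition \ref{rec}. The paper merely packages the approximation step as surjectivity of $\operatorname{K}_{n+1}^{M}(K)\to\prod_{\mathfrak p\in I}\operatorname{K}_{n+1}^{M}(K_{\mathfrak p})/U^{i}\operatorname{K}_{n+1}^{M}(K_{\mathfrak p})$ over the whole exceptional set rather than treating one symbol at one bad point at a time.
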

\begin{proof}
Now, we have a finite morphism
\begin{math}
X\to \mathbf{P}_{k}^{1}.
\end{math}
Every finite subset of $\mathbf{P}_{k}^{1}$ is contained in an open
 affine set, and so is $X$. Let $I$ be a subset of $X^{(0)}$ such that
 the elements of $X^{(0)} \backslash I$ split completely in $L$.
Then, there exists an open affine scheme $\spec(R)$ of $X$ that contains
 $I$.

For any positive integer $i$ and $\mathfrak p\in P$, let
\begin{math}
U^{i}(K_{\mathfrak p}^{*})
=\operatorname{Ker}\left(R_{\mathfrak p}^{*}
\to (R_{\mathfrak p}/\mathfrak p^{i}R_{\mathfrak p})^{*}\right)
\end{math}
and let
\begin{math}
U^{i}\operatorname{K}_{M}^{n+1}(K_{\mathfrak p})
\end{math}
be the subgroup generated by symbols $\{u, x_{1}, \cdots, x_{n}\}$ with
 $u\in U^{i}(K_{\mathfrak p})$ and $x_{1}, \cdots, x_{n}\in K_{\mathfrak p}^{*}$.

Then, the image
 of
\begin{equation*}
\operatorname{K}_{n+1}^{M}(K)\to
\prod_{\mathfrak p\in I}\operatorname{K}_{n+1}^{M}(K_{\mathfrak p})/
U^{i}\operatorname{K}_{n+1}^{M}(K_{\mathfrak p})
\end{equation*}
is surjective by the approximation theorem for a Dedekind domain $R$.

Moreover, a norm subgroup of $\operatorname{K}_{n+1}^{M}(K_{\mathfrak
 p})$ contains $U^{i}\operatorname{K}_{n+1}^{M}(K_{\mathfrak p})$ for
 a sufficiently large $i$.
Hence, the statement
 follows from Proposition \ref{rec} and Theorem \ref{hlt} (i), (ii).
\end{proof}
Let $C_{K}$ be the quotient of $I_{K}$ by the image of the subgroup
$\operatorname{K}_{n+1}^{M}(K)$.

By Proposition \ref{rec} and the pairing (\ref{pa1}), we obtain a pairing
\begin{equation}
\label{pa2}
\langle~~\rangle_{K}: \operatorname{H}^{1}(K, \mathbf{Q}/\mathbf{Z}) \otimes
C_{K}
\to \mathbf{Q}/\mathbf{Z}.
\end{equation}
Now, we observe that the quotient of
$C_{K}$ by the image of the subgroup
\begin{equation*}
\prod_{\mathfrak p\in P}\operatorname{K}_{n+1}^{M}(R_{\mathfrak p})
\end{equation*}
is canonically isomorphic to $\operatorname{SK}_{n}(X)$, because the
quotient of $\operatorname{K}_{n+1}^{M}(K_{\mathfrak p})$ by
$\operatorname{K}_{n+1}^{M}(R_{\mathfrak p})$ is isomorphic to
$\operatorname{K}_{n}^{M}(\kappa(\mathfrak p))$ via the boundary map
$\partial_{\mathfrak p}$.

Consequently, the pairing (\ref{pa2}) induces the pairing
\begin{math}
\operatorname{H}^{1}(K, \mathbf{Q}/\mathbf{Z})\otimes
 \operatorname{SK}_{n}(X)
\to \mathbf{Q}/\mathbf{Z},
\end{math}
and we obtain the homomorphism
\begin{math}
\tau: \operatorname{SK}_{n}(X)\to \operatorname{H}^{1}(K, \mathbf{Q}/\mathbf{Z})^{*}=\pi^{ab}_{1}(X).
\end{math}
\begin{cor}
\upshape
Let $k$ be an $n$-dimensional local field where the characteristic of $k$ is
 $0$ and $X$ is a smooth proper scheme over $k$. Then, we obtain the homomorphism
\begin{math}
\tau: \operatorname{SK}_{n}(X)\to \pi^{ab}_{1}(X).
\end{math}
\end{cor}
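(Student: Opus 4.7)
The plan is to reduce the higher-dimensional case to the curve case handled by Proposition \ref{rec}, by constructing a pairing
\[
\operatorname{H}^{1}(X, \mathbf{Q}/\mathbf{Z}) \otimes \operatorname{SK}_{n}(X) \to \mathbf{Q}/\mathbf{Z}
\]
pointwise on closed points and verifying descent to $\operatorname{SK}_n(X)$ by invoking reciprocity on one-dimensional subvarieties of $X$. The map $\tau$ will then be obtained from this pairing via Pontryagin duality, using the identification $\operatorname{H}^{1}(X, \mathbf{Q}/\mathbf{Z})^{*} = \pi_{1}^{ab}(X)$.

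For each $x \in X^{(0)}$, the residue field $\kappa(x)$ is a finite extension of $k$, hence again an $n$-dimensional local field. Theorem \ref{hlt}(ii) then provides a local pairing $\Phi^{1}_{\kappa(x)}(\chi|_{x})(a_{x}) \in \mathbf{Q}/\mathbf{Z}$ for $\chi \in \operatorname{H}^{1}(X, \mathbf{Q}/\mathbf{Z})$ and $a_{x} \in \operatorname{K}_{n}^{M}(\kappa(x))$. Summing over $x$ gives a candidate bilinear map $\langle \chi, a \rangle := \sum_{x} \Phi^{1}_{\kappa(x)}(\chi|_{x})(a_{x})$ on $\bigoplus_{x \in X^{(0)}} \operatorname{K}_{n}^{M}(\kappa(x))$; the whole task reduces to showing that this vanishes on the image of $\bigoplus_{y \in X^{(1)}} \operatorname{K}_{n+1}^{M}(\kappa(y))$.

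For this, I would fix $y \in X^{(1)}$ and $\beta \in \operatorname{K}_{n+1}^{M}(\kappa(y))$, and let $\widetilde{C} \to \overline{\{y\}}$ be the normalization. Because $X$ is proper and $\operatorname{char}(k) = 0$, $\widetilde{C}$ is smooth and proper over the algebraic closure $k'$ of $k$ in $\kappa(y)$, and $k'$ is still an $n$-dimensional local field since $k'/k$ is finite. Pulling $\chi$ back along $\widetilde{C} \to X$ yields a character $\chi|_{\widetilde{C}} \in \operatorname{H}^{1}(\widetilde{C}, \mathbf{Q}/\mathbf{Z})$ that is unramified at every $p \in \widetilde{C}^{(0)}$. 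Applying Proposition \ref{rec} to $\widetilde{C}$, with $\beta$ placed diagonally in $I_{R(\widetilde{C})}$, gives
\[
\sum_{p \in \widetilde{C}^{(0)}} \langle \chi|_{\widetilde{C}, p}, \beta \rangle_{p} = 0.
\]
Theorem \ref{hlt}(iii) then rewrites each unramified local term as the pairing of the residue of $\chi|_{\widetilde{C}, p}$ against $\partial_{p}(\beta) \in \operatorname{K}_{n}^{M}(\kappa(p))$. Grouping the $p$'s lying over a common $x \in X^{(0)}$ and applying the projection formula
\[
\langle \chi|_{x}, N_{\kappa(p)/\kappa(x)}(\partial_{p}(\beta)) \rangle_{x} = \langle \operatorname{res}_{\kappa(p)/\kappa(x)}(\chi|_{x}), \partial_{p}(\beta) \rangle_{p}
\]
converts the identity into $\sum_{x \in X^{(0)}} \langle \chi|_{x}, \partial_{x}^{y}(\beta) \rangle_{x} = 0$, which is precisely the vanishing that descent to $\operatorname{SK}_{n}(X)$ demands.

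The hardest step will be keeping these compatibilities straight: confirming that the pullback $\chi|_{\widetilde{C}}$ is genuinely unramified at each closed point of $\widetilde{C}$ (so that Theorem \ref{hlt}(iii) produces an honest residue pairing), and checking that the projection formula for higher local symbols aligns with the summation $\partial_{x}^{y} = \sum_{p \mid x} N_{\kappa(p)/\kappa(x)} \circ \partial_{p}$ used to define the boundary in the $\operatorname{SK}_{n}$ complex. Once these ingredients are in place, the pairing is well-defined on $\operatorname{SK}_{n}(X)$, and its adjoint supplies the desired $\tau$.
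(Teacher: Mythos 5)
Your proposal is correct and takes essentially the same route as the paper: the paper's proof is a one-line reduction to the already-established curve case ``by the same argument as in \cite[p.259, Lemma (3.2)]{S2}'', and what you spell out --- verifying that the pointwise pairing annihilates the boundary image by applying Proposition \ref{rec} to the normalization of $\overline{\{y\}}$ over its constant field and translating back via Theorem \ref{hlt} (iii) and the norm compatibility --- is precisely that reduction made explicit.
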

\begin{proof}
By the same argument as that in the proof of \cite[p.259, Lemma (3.2)]{S2},
 it is sufficient to prove the statement for a proper smooth curve over
 $k$. Thus, the proof is complete.
\end{proof}
Then, we have the following result for homomorphisms $\partial$ and $\tau$.
\begin{lem}
\label{delta}
\upshape
(c.f. \cite[p.261, Lemma 3.11 (2), (3)]{S2})
Let $k$ be an $n$-dimensional local field. Suppose that $X$ and $Y$
 satisfy Assumption \ref{ap}. Then,
\begin{enumerate}
\item The map $\partial$ is surjective.
\item We have the commutative diagram
\begin{equation}
\xymatrix{
\operatorname{SK}_{n}(X)\ar[d]_{\partial}\ar[r]^{\tau}
&\operatorname{\pi}^{ab}_{1}(X)\ar[d]^{\delta}
\\
\operatorname{SK}_{n-1}(Y)\ar[r]_{\tau}
&\operatorname{\pi}^{ab}_{1}(Y).
}
\end{equation}
Here, $\delta$ is the composition map
\begin{math}
\pi_{1}(X)\twoheadrightarrow\pi_{1}(\mathfrak X)\stackrel{\sim}{\gets}\pi_{1}(Y)
\end{math}
, in which the first map is surjective because $X$ is normal
  \cite[p.41, I, Examples 5.2 (b)]{Me} and the second map is an isomorphism
      by \cite[Theorem (3.1) and (3.4)]{A}.
\end{enumerate}
\end{lem}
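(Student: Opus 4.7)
The proof splits into two parts; I treat (i) by a local construction of lifts at each closed point, and (ii) by rewriting the diagram as an identity of pairings and invoking the functoriality built into higher local class field theory.

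For (i), it suffices to lift an arbitrary generating symbol $\beta_{x}=\{b_{1},\ldots,b_{n-1}\}$ with $b_{i}\in\kappa(x)^{*}$ at a single closed point $x\in Y^{(0)}$. Since $\mathfrak{X}$ is regular of dimension two, $A=\mathcal{O}_{\mathfrak{X},x}$ is a two-dimensional regular local ring with residue field $\kappa(x)$; writing $\pi$ for (the image of) a uniformizer of $\mathcal{O}_{k}$ in $A$, I choose a regular parameter $t\in\mathfrak{m}_{A}$ such that the height-one prime $(t)$ is horizontal (does not contain $\pi$), and let $u\in X^{(0)}$ be the corresponding closed point with $C=\overline{\{u\}}\subset\mathfrak{X}$. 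The curve $C$ meets $Y$ in a finite set of closed points including $x$. Lifting each $b_{i}$ to a unit in $A^{*}$ and then applying weak approximation for the finitely many discrete valuations of $\kappa(u)=R(C)$ attached to the points of $C\cap Y^{(0)}$, I produce units $\tilde{b}_{i}\in\kappa(u)^{*}$ whose residue at $x$ equals $b_{i}$ and whose residue at every other point of $C\cap Y^{(0)}$ equals $1$. Setting $\alpha_{u}=\{\pi,\tilde{b}_{1},\ldots,\tilde{b}_{n-1}\}\in\operatorname{K}_{n}^{M}(\kappa(u))$, the tame symbol of $\alpha_{u}$ at $x$ is $\beta_{x}$, while every other tame symbol on $C\cap Y^{(0)}$ vanishes, so $\partial[\alpha_{u}]=[\beta_{x}]$ in $\operatorname{SK}_{n-1}(Y)$.

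For (ii), the commutativity $\delta\circ\tau=\tau\circ\partial$ is equivalent, via the construction of $\tau$ through the pairings (\ref{pa1}) and (\ref{pa2}), to the dual identity
\begin{equation*}
\langle\delta^{*}(\chi),\alpha\rangle_{K}=\langle\chi,\partial\alpha\rangle_{R(Y)}
\end{equation*}
for $\alpha\in\operatorname{SK}_{n}(X)$ and every $\chi\in\operatorname{H}^{1}(R(Y),\mathbf{Q}/\mathbf{Z})$ lifted from $\pi_{1}^{ab}(Y)$. Expanding both sides as sums of local contributions, for each pair $(u,x)$ with $u\in X^{(0)}$ and $x\in\overline{\{u\}}\cap Y^{(0)}$, the restriction $\chi_{u}=\delta^{*}(\chi)|_{K_{u}}$ is unramified since $\chi$ comes from $\pi_{1}^{ab}(\mathfrak{X})$, and so descends through the DVR $R_{u}$ to a character $\bar{\chi}_{u}\in\operatorname{H}^{1}(\kappa(u),\mathbf{Q}/\mathbf{Z})$; this character is in turn unramified at the DVR of $\kappa(u)$ coming from each $u'$ over $x$ in the normalization of $C=\overline{\{u\}}$, and its further residue in $\operatorname{H}^{1}(\kappa(u'),\mathbf{Q}/\mathbf{Z})$ is the restriction of $\chi_{x}$ along $\kappa(u')/\kappa(x)$. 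Two applications of Theorem \ref{hlt} (iii) (once at $K_{u}\to\kappa(u)$ and again at $\kappa(u)\to\kappa(u')$), combined with the standard functoriality of the local pairing under the norm $\operatorname{N}_{\kappa(u')/\kappa(x)}$, identify the local contributions on the two sides and thus the full sums.

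The main difficulty in (i) is global rather than local: the horizontal curve $\overline{\{u\}}$ will in general meet $Y$ at closed points other than $x$, yielding unwanted tame-symbol contributions; the use of weak approximation on $\kappa(u)$ to prescribe residues simultaneously at every intersection point is the key device that converts the local lift into a globally acceptable one. For (ii), the main bookkeeping task is the two-step reduction of $\delta^{*}(\chi)$ from $K_{u}$ through $\kappa(u)$ to $\kappa(u')$ and correctly matching the resulting residue with $\chi_{x}$ via $\kappa(u')/\kappa(x)$, then iterating Theorem \ref{hlt} (iii); this is the higher-dimensional analogue of the corresponding step in \cite[p.261, Lemma 3.11 (2), (3)]{S2}.
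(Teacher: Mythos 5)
The paper itself disposes of this lemma in one line (``similar to \cite[p.261, Lemma 3.11 (2), (3)]{S2}, using Theorem \ref{hlt} (iii) for (ii)''), so the real question is whether your reconstruction is sound. Part (ii) is: your reduction to the dual identity of pairings and the two applications of Theorem \ref{hlt} (iii) (at $K_{u}\to\kappa(u)$ and at $\kappa(u)\to\kappa(u')$), together with the norm compatibility, is exactly the intended argument. One simplification you missed: since $\mathcal{O}_{k}$ is \emph{Henselian} and $\overline{\{u\}}\to\spec(\mathcal{O}_{k})$ is finite with $\overline{\{u\}}$ integral, the curve $\overline{\{u\}}$ is local and meets $Y$ in exactly one closed point, with a unique point of the normalization above it; so the ``main difficulty'' you describe (unwanted contributions at other intersection points) does not arise, and the weak approximation step is unnecessary.

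There is, however, a genuine gap in your proof of (i). Assumption \ref{ap} only requires $\mathfrak{X}$ to be regular and proper over $S$; the special fibre $Y$ may be non-reduced at $x$, i.e.\ $\pi\in\mathfrak{m}_{A}^{2}$. In that case, for \emph{every} horizontal regular parameter $t$, the image of $\pi$ in the discrete valuation ring $A/(t)=\mathcal{O}_{\overline{\{u\}},x}$ has valuation $e=v_{x}(\pi)\geq 2$, and the tame symbol of $\{\pi,\tilde{b}_{1},\ldots,\tilde{b}_{n-1}\}$ at $x$ is $e\cdot\beta_{x}$, not $\beta_{x}$. Since $\operatorname{K}_{n-1}^{M}(\kappa(x))$ for an $(n-1)$-dimensional local field $\kappa(x)$ is not divisible (already for $n-1=0$ it is $\mathbf{Z}$), hitting only $e\cdot\beta_{x}$ does not yield surjectivity. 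The fix is the one implicit in \cite{S2}: do not put $\pi$ in the first slot, but rather a uniformizer $s$ of the DVR $A/(t)$ (any $s\in\mathfrak{m}_{A}$ with $(s,t)=\mathfrak{m}_{A}$ will do); then $\partial_{x}\{s,\tilde{b}_{1},\ldots,\tilde{b}_{n-1}\}=\beta_{x}$ on the nose, and since $A/(t)$ is already a DVR the normalization introduces no new points and the norm $\operatorname{N}_{\kappa(x')/\kappa(x)}$ is the identity. With that substitution your argument for (i) goes through.
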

\begin{proof}
The proof is similar to \cite[p.261, Lemma 3.11 (2),
 (3)]{S2}. We use Theorem \ref{hlt} (iii) to prove (ii).
\end{proof}
The following is known as a generalization of a Henselian regular local ring .
\begin{lem}
\label{lK}
\upshape
(\cite[p.263, Lemma 3.15]{S2})
Let $A$ be a Henselian regular local ring of dimension $\geqq 2$, with
 perfect residue field $F$ and quotient field $K$. Let $T$ be a
 regular parameter of $A$, and suppose that in case
 $\operatorname{ch}(K)=0$ and $\operatorname{ch}(K)=p> 0$, $(T)$ is the
 unique prime ideal of height one that divides $(p)$. Put
 $U=\spec(A[1/T])$.

Then, if
\begin{math}
\chi\in
 \operatorname{H}^{1}(U, \mathbf{Q}/\mathbf{Z})
\end{math}
induces an unramified
 character $\chi_{u}\in \operatorname{H}^{1}(u, \mathbf{Q}/\mathbf{Z})$
 for each $u\in U^{(0)}$, $\chi$ comes from
 $\operatorname{H}^{1}(\spec(A), \mathbf{Q}/\mathbf{Z})$.
\end{lem}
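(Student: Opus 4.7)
My plan is to combine Zariski--Nagata purity of the branch locus with a reciprocity-type argument analogous to Proposition \ref{rec}. First I would reformulate the conclusion via purity: since $A$ is regular of dimension $\geq 2$ and $V(T)$ is the unique codimension-one subscheme of $\spec(A)$ not contained in $U$, a character $\chi\in\operatorname{H}^{1}(U,\mathbf{Q}/\mathbf{Z})$ extends to $\operatorname{H}^{1}(\spec(A),\mathbf{Q}/\mathbf{Z})$ exactly when its residue $\partial_{\eta_{T}}(\chi)\in\operatorname{H}^{1}(\kappa(\eta_{T}),\mathbf{Q}/\mathbf{Z})$ at the generic point $\eta_{T}$ of $V(T)$ vanishes. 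Thus the problem reduces to killing a single residue.

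Next I would translate the hypothesis at $u\in U^{(0)}$ into ramification information along the codimension-one primes of $A$ distinct from $(T)$. By Theorem \ref{hlt} (iii) applied at each such prime, the unramifiedness of $\chi_{u}$ is equivalent to the vanishing of $\Phi_{K}^{1}(\chi_{u})$ on the kernel of the corresponding boundary map in Milnor $K$-theory; combined with the fact that $\chi$ already lives on $U$, this pins down the ramification behaviour of $\chi$ at every height-one prime of $A$ other than $(T)$.

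Finally I would induct on $n=\dim(A)$. In the base case $n=2$, Kato's two-dimensional local class field theory (Theorem \ref{hlt}) supplies a reciprocity of Proposition \ref{rec} type, which forces the sum of local residues of $\chi$ along all height-one primes of $A$ to be zero; since every residue away from $(T)$ vanishes by the previous step, the residue at $(T)$ must also vanish, and purity concludes. For $n>2$ one descends by cutting with a second regular parameter $S$ transverse to $T$, using that $A/(S)$ is again a Henselian regular local ring with perfect residue field, and applying the inductive hypothesis to reduce the ramification question to a lower-dimensional Henselian situation. The main obstacle will be making the reciprocity precise in higher dimension and verifying the compatibility of the cut-by-$S$ dévissage with the boundary maps of the Gersten-type complex; here the perfectness of $F$ and the hypothesis that $(T)$ is the unique height-one prime dividing $(p)$ (confining any wild ramification to $V(T)$) are both essential, since they ensure that cutting by $S$ neither destroys the class-field-theoretic control at the residual divisors nor introduces new obstructions.
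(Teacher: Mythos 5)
The paper does not actually prove Lemma \ref{lK}: it is imported verbatim from \cite[p.263, Lemma 3.15]{S2}, so your attempt must be judged on its own merits. Your opening reduction is right in spirit --- by Zariski--Nagata purity the only question is whether the cyclic extension of $K$ cut out by $\chi$ is unramified at the discrete valuation $v_{T}$ --- but the obstruction is not a class in $\operatorname{H}^{1}(\kappa(\eta_{T}),\mathbf{Q}/\mathbf{Z})$: the tame obstruction is a residue in an $\operatorname{H}^{0}$ of $\kappa(\eta_{T})$ (with a Tate twist), and in mixed characteristic the $p$-primary, wildly ramified part is not measured by any such residue at all. That wild part is exactly where the perfectness of $F$ and the hypothesis that $(T)$ is the unique height-one prime over $(p)$ are used, and your sketch gives no mechanism for it beyond asserting that these hypotheses ``confine'' it.

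The serious gap is the dimension-two step. A reciprocity of the form ``the sum of the residues of $\chi$ over all height-one primes of $A$ is zero'' is not available here: Theorem \ref{hlt} and Proposition \ref{rec} require $n$-dimensional local fields, whose bottom residue field is finite, whereas $F$ is merely perfect and $K=\operatorname{Frac}(A)$ is not such a field (the residues would not even lie in a common group). Worse, even granting such a formula, your argument could not use the hypothesis: since $\chi$ is a class on all of $U$, it is \emph{automatically} unramified along every height-one prime other than $(T)$, so ``every residue away from $(T)$ vanishes'' is vacuous and you would conclude that $\chi$ extends for \emph{every} $\chi\in\operatorname{H}^{1}(U,\mathbf{Q}/\mathbf{Z})$ --- which is false (a Kummer class of $T$, when $A$ contains the relevant roots of unity, lies in $\operatorname{H}^{1}(U,\mathbf{Q}/\mathbf{Z})$ and does not extend). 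The hypothesis concerns the ramification of $\chi_{u}$ at the closed point of $\overline{\{u\}}$, a codimension-two phenomenon, and the correct linking statement is a double-residue compatibility at the closed point: choosing $S$ so that $(S,T)$ is part of a regular system of parameters, $A/(S)$ is a Henselian discrete valuation ring with uniformizer $\bar{T}$, the ramification of $\chi_{u}$ for $u=\eta_{(S)}$ at $v_{\bar{T}}$ computes the specialization of the residue of $\chi$ along $(T)$ to the closed point, and one concludes because $V(T)$ is Henselian local, so sections of a locally constant sheaf over $V(T)$ are determined by their value at the closed point. Your cut-by-$S$ induction for $n>2$ has the right shape but also needs an argument that the hypothesis is inherited by $A/(S)$, since the closed points of $\spec(A/(S))[1/T]$ are codimension-two points of $\spec(A)$, to which the stated hypothesis does not directly apply.
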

We show the following fact by using this result and higher local class
field theory.
\begin{prop}
\upshape
\label{sk}
(c.f. \cite[Proposition 3.12]{S2})
Let $k$ be an $n$-dimensional local field. Suppose that $\mathfrak{X}$, $X$, and $Y$ satisfy Assumption
 \ref{ap}. Moreover, $\mathfrak X$ is smooth over $S$. Let
\begin{math}
\chi\in\operatorname{H}^{1}(X, \mathbf{Q}/\mathbf{Z})
\end{math}
and
\begin{math}
\tilde{\chi}: \operatorname{SK}_{n}(X)\to \mathbf{Q}/\mathbf{Z}
\end{math}
be the induced homomorphism. Then, the following are equivalent:
\begin{enumerate}
\item
$\chi$ comes from the subgroup
\begin{equation*}
\operatorname{H}^{1}(Y, \mathbf{Q}/\mathbf{Z})
\simeq
 \operatorname{H}^{1}(\mathfrak X, \mathbf{Q}/\mathbf{Z})
\hookrightarrow\operatorname{H}^{1}(X, \mathbf{Q}/\mathbf{Z}).
\end{equation*}
\item $\tilde{\chi}$ factors through the map $\partial$.
\end{enumerate}
\end{prop}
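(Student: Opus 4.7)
The plan is to establish the equivalence by exploiting the commutative square of Lemma \ref{delta}(ii) for the direction (i) $\Rightarrow$ (ii), and by combining Lemma \ref{lK} with the higher local class field theory of Theorem \ref{hlt} for the converse.

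For (i) $\Rightarrow$ (ii), the argument is essentially formal. Under the identification $\operatorname{H}^{1}(-,\mathbf{Q}/\mathbf{Z})=\operatorname{Hom}(\pi_{1}^{ab}(-),\mathbf{Q}/\mathbf{Z})$, the assumption means $\chi=\chi'\circ\delta$ for some character $\chi'$ of $\pi_{1}^{ab}(Y)$. Composing with $\tau$ and invoking the commutative square of Lemma \ref{delta}(ii) gives
\begin{equation*}
\tilde\chi=\chi\circ\tau=\chi'\circ\delta\circ\tau=\chi'\circ\tau\circ\partial=(\chi'\circ\tau)\circ\partial,
\end{equation*}
which exhibits $\tilde\chi$ as a factorization through $\partial$.

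For (ii) $\Rightarrow$ (i), I would argue via Zariski--Nagata purity for the abelian fundamental group: since $\mathfrak{X}$ is regular and $Y$ is a divisor in $\mathfrak{X}$, a character $\chi\in\operatorname{H}^{1}(X,\mathbf{Q}/\mathbf{Z})$ extends to $\operatorname{H}^{1}(\mathfrak{X},\mathbf{Q}/\mathbf{Z})\simeq\operatorname{H}^{1}(Y,\mathbf{Q}/\mathbf{Z})$ precisely when, for every generic point $y$ of $Y$, its restriction to $\operatorname{Frac}(\widetilde{\mathcal{O}_{\mathfrak{X},y}})$ is unramified. To check unramifiedness at a fixed such $y$, I would pick a closed point $y_{0}$ in the closure of $y$, set $B=\widetilde{\mathcal{O}_{\mathfrak{X},y_{0}}}$, and take $T$ to be a uniformizer of $\mathcal{O}_{k}$. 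In the curve case $\dim X=1$, $B$ is a $2$-dimensional Henselian regular local ring and Lemma \ref{lK} reduces the problem to showing that $\chi$ induces an unramified character at every $u\in\operatorname{Spec}(B[1/T])^{(0)}$; the higher-dimensional case follows by specializing along a chain in $Y$ ending at $y_{0}$ and applying Lemma \ref{lK} at each step. By Theorem \ref{hlt}(iii), the unramifiedness at such a $u$ is equivalent to the vanishing of the induced character of $\operatorname{K}_{n+1}^{M}$ on $\operatorname{Ker}(\partial_{u})$, and this vanishing is exactly what the hypothesis (ii), combined with the construction of $\tau$ via the pairing (\ref{pa2}) and the anti-commutative diagram that defines $\partial:\operatorname{SK}_{n}(X)\to\operatorname{SK}_{n-1}(Y)$, provides.

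The main obstacle is the local-to-global translation in the last step: one must convert the single global assumption that $\tilde\chi$ factors through $\partial:\operatorname{SK}_{n}(X)\to\operatorname{SK}_{n-1}(Y)$ into the family of local statements that, at each relevant $u$, the associated character of $\operatorname{K}_{n+1}^{M}$ kills the kernel of the tame symbol. For this I would rely on the surjectivity of $\partial$ from Lemma \ref{delta}(i), together with the duality of Theorem \ref{hlt}(ii), to reduce the required vanishing to a $K$-theoretic computation with tame symbols in the anti-commutative diagram preceding the definition of $\partial$, exactly as in the argument of \cite[Proposition 3.12]{S2}.
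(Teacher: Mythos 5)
Your argument is correct and follows essentially the same route as the paper: (i)$\Rightarrow$(ii) is the formal consequence of the commutative square in Lemma \ref{delta}, and (ii)$\Rightarrow$(i) proceeds by deducing from the factorization through $\partial$ together with Theorem \ref{hlt}(iii) that $\chi$ is unramified at every closed point of $X$, then applying Lemma \ref{lK} with $T$ a uniformizer of $\mathcal{O}_{k}$ at the Henselian local rings at closed points of $Y$ (the paper then glues by descent where you invoke purity). The only blemish is an index: for $u\in X^{(0)}$ the field $\kappa(u)$ is an $n$-dimensional local field, so the character induced by $\chi_{u}$ lives on $\operatorname{K}_{n}^{M}(\kappa(u))$, not $\operatorname{K}_{n+1}^{M}$.
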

\begin{proof}
 The proof is similar to that of \cite[Proposition 3.12]{S2}, as
 follows. (i) implies (ii) by Lemma \ref{delta}. Therefore, it is
 sufficient to show that (ii) implies (i). For $x\in Y^{(0)}$, let
$A_{x}$ be the Henselization of the local ring
 $\mathcal{O}_{x}$ of $\mathfrak{X}$ at $x$.
Let $U_{x}=\spec(A_{x})\times_{\mathfrak X}X$, and let
\begin{math}
\chi_{x}\in \operatorname{H}^{1}(U_{x}, \mathbf{Q}/\mathbf{Z})
\end{math}
be the restriction of $\chi$. Then, the canonical morphism $U_{x}\to X$
 induces a bijection
\begin{equation*}
j: (U_{x})^{(0)}\to \{u\in X^{(0)}| u\to x\},
\end{equation*}
and for $u\in (U_{x})^{(0)}$, $\kappa(j(u))\simeq\kappa(u)$.

We assume (ii). Then, for every $u\in (U_{x})^{(0)}$, the
 restriction
$\chi _{x, u}\in \operatorname{H}^{1}(u,
 \mathbf{Q}/\mathbf{Z})$ of $\chi_{x}$ corresponds to an unramified
 extension of $\kappa(u)$ by Theorem \ref{hlt} (iii). Let $t$ be a prime
 element of $\mathcal{O}_{k}$. Then, $t$ becomes a regular parameter of $A_{x}$ by the assumption
 of $X$. Hence, $\chi_{x}$ comes from $\operatorname{H}^{1}(\spec(A_{x}),
 \mathbf{Q}/\mathbf{Z})$ by Lemma \ref{lK}. Therefore, (ii) implies
 (i) by descent theory.
\end{proof}
\begin{defn}
\label{CS}
\upshape
Let $Z$ be a Noetherian scheme. A finite etale covering

\begin{math}
f: U\to Z
\end{math}
is called a \emph{c.s.~covering} if any closed point $x$ of $Z$ \emph{splits
 completely} in the covering, that is, $\spec\kappa(x)\times_{Z}U$ is
 isomorphic to a finite sum $\spec\kappa(x)$, where $\kappa(x)$ is the
 residue field of $x$. Let $\pi_{1}^{c.s.}(Z)$ denote the quotient
 group of $\pi_{1}^{ab}(Z)$ that classifies c.s. coverings of $Z$.
\end{defn}
Let $X$ be a normal scheme. Then,
\begin{equation*}
\operatorname{Hom}_{cont}\left(\pi_{1}^{c.s}(X), \mathbf{Q}/\mathbf{Z}\right)
=\operatorname{Ker}\left(\operatorname{H}^{1}(X, \mathbf{Q}/\mathbf{Z})
\to \prod_{\mathfrak p\in X^{(0)}}\operatorname{H}^{1}(\kappa(\mathfrak p),
 \mathbf{Q}/\mathbf{Z})\right)
\end{equation*}
by Definition \ref{CS}. In addition,
\begin{align*}
&\operatorname{Ker}\left(\operatorname{H}^{1}(X, \mathbf{Q}/\mathbf{Z})
\to \prod_{\mathfrak p\in X_{(1)}}\operatorname{H}^{1}(\kappa(\mathfrak p),
 \mathbf{Q}/\mathbf{Z})\right)\\
=&\operatorname{Ker}\left(\operatorname{H}^{1}(R(X), \mathbf{Q}/\mathbf{Z})
\to \prod_{\mathfrak p\in X_{(1)}}\operatorname{H}^{1}(\widetilde{R(X)}_{\mathfrak p},
 \mathbf{Q}/\mathbf{Z})
\right) .
\end{align*}
\begin{lem}
\upshape
\label{inh}
(c.f. \cite[p.524, Lemma 5.4]{JS})
Let $k$ be an $n$-dimensional local field. Suppose that $\mathfrak{X}$, $X$, and $Y$ satisfy Assumption \ref{ap}. Moreover,
 $\mathfrak{X}$ is smooth over $S$. Then, the specialization map
\begin{math}
\operatorname{\delta}_{\mathfrak X}
\end{math}
(where we omitted suitable base points) is surjective and induces an
 isomorphism
\begin{equation}
\label{c.s.}
\operatorname{\pi}_{1}(X)^{c.s.}\simeq
\operatorname{\pi}_{1}(Y)^{c.s.}.
\end{equation}
\end{lem}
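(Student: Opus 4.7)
The plan is to pass to character groups and translate the claim into the isomorphism
\[
K_{Y}\simeq K_{X}\qquad\text{where}\qquad
K_{Z}:=\operatorname{Ker}\!\left(\operatorname{H}^{1}(Z,\mathbf{Q}/\mathbf{Z})\to \prod_{\mathfrak{p}\in Z^{(0)}}\operatorname{H}^{1}(\kappa(\mathfrak{p}),\mathbf{Q}/\mathbf{Z})\right),
\]
these being Pontryagin dual to $\pi_{1}^{c.s.}(Z)$ as recorded just before the lemma. Surjectivity of $\delta_{\mathfrak{X}}$ itself is already in Lemma \ref{delta}(ii), where $\delta_{\mathfrak{X}}$ factors as $\pi_{1}(X)\twoheadrightarrow \pi_{1}(\mathfrak{X})\stackrel{\sim}{\gets}\pi_{1}(Y)$. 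Well-definedness of the induced map on c.s.\ quotients is checked as follows: for $u\in X^{(0)}$ the closure $\overline{\{u\}}\subset\mathfrak{X}$ is a $1$-dimensional proper integral $\mathcal{O}_{k}$-scheme, so applying Artin's theorem to this arithmetic curve transports triviality of a character $\chi'\in K_{Y}$ at the closed points of $\overline{\{u\}}\cap Y$ into triviality at $u$. Injectivity of $K_{Y}\to K_{X}$ then follows from injectivity of $\operatorname{H}^{1}(Y,\mathbf{Q}/\mathbf{Z})\simeq \operatorname{H}^{1}(\mathfrak{X},\mathbf{Q}/\mathbf{Z})\hookrightarrow \operatorname{H}^{1}(X,\mathbf{Q}/\mathbf{Z})$, which holds by absolute cohomological purity for the smooth pair $(\mathfrak{X},Y)$ of codimension one (killing $\operatorname{H}^{1}_{Y}(\mathfrak{X},\mathbf{Q}/\mathbf{Z})$).

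The substantive content is surjectivity of $K_{Y}\to K_{X}$. Given $\chi\in K_{X}$, I would first lift $\chi$ to a class $\chi'\in \operatorname{H}^{1}(Y,\mathbf{Q}/\mathbf{Z})$ by Proposition \ref{sk}: it suffices to show that $\tilde{\chi}:\operatorname{SK}_{n}(X)\to \mathbf{Q}/\mathbf{Z}$ factors through $\partial$. Because $\chi|_{\kappa(u)}=0$ for every $u\in X^{(0)}$, each local pairing $\langle\chi|_{\kappa(u)},\cdot\rangle$ vanishes by Theorem \ref{hlt}, so $\tilde{\chi}\equiv 0$ identically and the factorization is trivial; the injectivity above guarantees $\chi'$ is the unique lift.

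Next I would verify $\chi'\in K_{Y}$: fix $y\in Y^{(0)}$ and let $A_{y}$ be the Henselization of $\mathcal{O}_{\mathfrak{X},y}$, which by smoothness of $\mathfrak{X}/S$ is Henselian regular local of dimension $\dim X+1$ with the uniformizer $t$ of $\mathcal{O}_{k}$ as a regular parameter. Set $U_{y}=\spec(A_{y}[1/t])=\spec(A_{y})\times_{\mathfrak{X}}X$. Each closed point $\tilde{u}\in U_{y}^{(0)}$ contracts via the Henselization to a closed point $u\in X^{(0)}$ specializing to $y$, with residue field extension $\kappa(u)\hookrightarrow \kappa(\tilde{u})$; so $\chi|_{\kappa(\tilde{u})}$ is the pullback of $\chi|_{\kappa(u)}=0$ and in particular unramified. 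Lemma \ref{lK} then produces a unique $\chi_{y}\in \operatorname{H}^{1}(\spec(A_{y}),\mathbf{Q}/\mathbf{Z})=\operatorname{H}^{1}(\kappa(y),\mathbf{Q}/\mathbf{Z})$ lifting $\chi|_{U_{y}}$, and uniqueness identifies $\chi_{y}$ with $\chi'|_{\kappa(y)}$. Finally, $\chi_{y}|_{\kappa(\tilde{u})}=\chi|_{\kappa(\tilde{u})}=0$ together with injectivity of $\operatorname{H}^{1}(\kappa(y),\mathbf{Q}/\mathbf{Z})\hookrightarrow \operatorname{H}^{1}(\kappa(\tilde{u}),\mathbf{Q}/\mathbf{Z})$ (the unramified characters of the Henselian DVR $\kappa(\tilde{u})$ being precisely characters of its residue field $\kappa(y)$) forces $\chi'|_{\kappa(y)}=0$. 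The main obstacle will be this last step: carefully identifying closed points of $U_{y}$ with closed points of $X$ specializing to $y$ through the Henselization, and verifying that Lemma \ref{lK}'s hypothesis holds at \emph{every} closed point of $U_{y}$. The lift from Proposition \ref{sk} is cheap because $\tilde{\chi}$ vanishes identically; the genuine content lies in transporting triviality of $\chi$ at closed points of $X$ into triviality of $\chi'$ at closed points of $Y$ via Lemma \ref{lK}.
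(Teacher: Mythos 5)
Your overall strategy is the same as the paper's, just written out in dual form: surjectivity of $\pi_{1}(X)^{c.s.}\to\pi_{1}(Y)^{c.s.}$ comes from surjectivity of $\delta$ (equivalently, injectivity of $\operatorname{H}^{1}(\mathfrak X,\mathbf{Q}/\mathbf{Z})\to\operatorname{H}^{1}(X,\mathbf{Q}/\mathbf{Z})$, which the paper extracts from normality of $\mathfrak X$ in Lemma \ref{delta} rather than from purity -- a cosmetic difference, though purity with $\mathbf{Q}/\mathbf{Z}$-coefficients is delicate at the residue characteristic), and injectivity comes from Proposition \ref{sk} applied to a completely split character $\chi$, for which $\tilde\chi=0$ so the factorization through $\partial$ is automatic. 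The lifting step and the well-definedness check are fine.

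There is, however, a genuine gap in your final step, where you conclude $\chi'|_{\kappa(y)}=0$ from $\chi_{y}|_{\kappa(\tilde u)}=0$ for a closed point $\tilde u\in U_{y}^{(0)}$. You assert that $\kappa(\tilde u)$ is a Henselian discrete valuation field with residue field $\kappa(y)$, so that $\operatorname{H}^{1}(\kappa(y),\mathbf{Q}/\mathbf{Z})\to\operatorname{H}^{1}(\kappa(\tilde u),\mathbf{Q}/\mathbf{Z})$ is injective. For an arbitrary $\tilde u$ this is false: the closure of $\tilde u$ in $\spec(A_{y})$ is the spectrum of a one-dimensional Henselian local domain whose normalization has residue field $F'$ a possibly nontrivial finite extension of $\kappa(y)$, and the composite $\operatorname{H}^{1}(\kappa(y))\to\operatorname{H}^{1}(F')\to\operatorname{H}^{1}(\kappa(\tilde u))$ kills precisely the characters whose cyclic extension embeds into $F'$. (Concretely, for $A_{y}=\mathcal{O}_{k}\{x\}$ and the prime $(x^{2}-a)$ with $\bar a$ a nonsquare unit of $\kappa(y)$, the quadratic character cut out by $\kappa(y)(\sqrt{\bar a})$ dies.) The repair uses the smoothness hypothesis once more: since $t$ is part of a regular system of parameters $(s,t)$ of $A_{y}$, the quotient $A_{y}/(s)$ is a Henselian discrete valuation ring, flat over $\mathcal{O}_{k}$, with residue field exactly $\kappa(y)$; its generic point is a closed point $\tilde u_{0}$ of $U_{y}$ for which your injectivity claim does hold -- equivalently, restrict $\chi_{y}$ along the normal integral scheme $\spec(A_{y}/(s))$, where $\operatorname{H}^{1}(\spec(A_{y}/(s)))=\operatorname{H}^{1}(\kappa(y))$ injects into $\operatorname{H}^{1}$ of the generic point. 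Running your last step with this particular $\tilde u_{0}$ instead of an arbitrary $\tilde u$ closes the argument.
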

\begin{proof}
The proof is similar to \cite[p.524, Lemma 5.4]{JS}. The
 surjectivity of $\delta_{X}$ follows from the definition. Therefore, the
 surjectivity of $(\ref{c.s.})$ follows. The injectivity of
 $(\ref{c.s.})$ follows from
Proposition \ref{sk}.
\end{proof}

Let $k_{0}, k_{1}, \cdots k_{n}=K$ be a sequence of fields as in
Definition \ref{DH}, and let $X$ be a proper algebraic curve
over $K$. Let $\mathcal{O}_{k_{i}}$ ($0\leqq i\leqq n$) be a Henselian
discrete valuation with quotient field $k_{i}$.
When $X$ satisfies the following condition, we say that $X$ has a good
reduction in each step.

$X$ has a good reduction, that is, there is a proper smooth morphism
$\mathfrak X\to \spec \mathcal{O}_{k_{n}}$. For any irreducible
component $X_{n-1}$ of $\mathfrak X\otimes_{\mathcal{O}_{k_{n}}}k_{n-1}$
, it has a good reduction. Moreover, we can repeat the above step until we obtain a proper smooth morphism $X_{0}\to \spec k_{0}$.

Then, we have the following result.
\begin{prop}
\label{hlg}
\upshape
Let $k$ be an $n$-dimensional local field. Let $X$ be a proper algebraic curve over $k$ and regular with a good
 reduction in each step, and let $P$ be
 the set that contains almost all closed points of $X$. Then,
\begin{equation*}
\operatorname{Ker}\left(\operatorname{H}^{1}(R(X), \mathbf{Q}/\mathbf{Z})
\to \prod_{\mathfrak p\in P}\operatorname{H}^{1}(\widetilde{R(X)}_{\mathfrak p},
 \mathbf{Q}/\mathbf{Z})
\right) =0.
\end{equation*}
\end{prop}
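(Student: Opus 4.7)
The plan is to prove the proposition by induction on $n$, whose core step is the specialization isomorphism $\pi_{1}^{c.s.}(X)\simeq \pi_{1}^{c.s.}(Y)$ from Lemma \ref{inh}. First, given $\chi$ in the kernel, I would observe that the cyclic extension of $R(X)$ attached to $\chi$ splits completely at every $\mathfrak{p}\in P$, hence at almost all closed points of the regular proper curve $X$; Corollary \ref{almost2} then promotes this to complete splitting at every $\mathfrak{p}\in X^{(0)}$. In particular $\chi$ would be unramified at every codimension one point of the regular scheme $X$, so by Zariski--Nagata purity it extends to a class in $\operatorname{H}^{1}(X,\mathbf{Q}/\mathbf{Z})$ whose restriction $\chi|_{\kappa(\mathfrak{p})}$ vanishes for every $\mathfrak{p}\in X^{(0)}$.

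Next, by Definition \ref{CS} and the identification of kernels recalled just before Lemma \ref{inh}, the kernel in the statement is therefore $\operatorname{Hom}_{\mathrm{cont}}(\pi_{1}^{c.s.}(X),\mathbf{Q}/\mathbf{Z})$, so it suffices to show $\pi_{1}^{c.s.}(X)=0$. I would then invoke Lemma \ref{inh} to identify this with $\pi_{1}^{c.s.}(Y)$, where $Y=\mathfrak{X}\otimes_{\mathcal{O}_{k}}F$; since $\mathfrak{X}$ is smooth over $S$, $Y$ is a smooth proper curve over the $(n-1)$-dimensional local field $F$, and the ``good reduction in each step'' hypothesis on $X$ is tailored exactly so that every irreducible component of $Y$ satisfies the same hypothesis one dimension lower. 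Applying the inductive hypothesis component by component would yield $\pi_{1}^{c.s.}(Y)=0$, whence $\pi_{1}^{c.s.}(X)=0$.

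The base case $n=0$ reduces to the vanishing of $\pi_{1}^{c.s.}(X)$ for a smooth proper curve $X$ over a finite field, which is the classical Hasse principle for characters of a one-variable function field over a finite field. The main obstacle lies not in this outer induction but inside Lemma \ref{inh}: its proof rests on Proposition \ref{sk}, and the crucial input is that, once $\chi$ splits completely at every closed point, the induced functional $\tilde{\chi}\colon\operatorname{SK}_{n}(X)\to\mathbf{Q}/\mathbf{Z}$ factors through $\partial\colon\operatorname{SK}_{n}(X)\to\operatorname{SK}_{n-1}(Y)$. Verifying this factorization via Theorem \ref{hlt}(iii) and Lemma \ref{lK} is where higher local class field theory genuinely enters the argument.
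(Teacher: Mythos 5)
Your proposal is correct and follows essentially the same route as the paper: the paper's own proof is the single line ``Proposition \ref{hlg} follows from Lemma \ref{inh} and Corollary \ref{almost2},'' and your argument simply makes explicit the intended content of that line --- Corollary \ref{almost2} to pass from almost all to all closed points, the identification of the kernel with $\operatorname{Hom}_{cont}(\pi_{1}^{c.s.}(X),\mathbf{Q}/\mathbf{Z})$, and the descending induction on $n$ via the specialization isomorphism of Lemma \ref{inh} down to the classical case over a finite field.
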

\begin{proof}
Proposition \ref{hlg} follows from Lemma \ref{inh} and Corollary \ref{almost2}.
\end{proof}
\section{Proof of Theorem \ref{l-g-c}}
First, we prove the following Lemmas in order to prove the main result.
\begin{lem}
\upshape
\label{gr}
Let $A$ be a Dedekind domain; $k$, its quotient field; and $X$, a proper
 algebraic curve over $k$.
Let $k_{\mathfrak p}$ be the Henselization of $k$ at $\mathfrak p \in \spec(A)$.
Suppose that the characteristic of $k$ is $0$.

Then, $X\times_{k} k_{\mathfrak p}$
has a good reduction for almost all $\mathfrak p \in \spec(A)$.
\end{lem}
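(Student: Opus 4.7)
The plan is to spread $X$ out to a proper model over an open subset of $\spec(A)$ and then invoke openness of smoothness. I take $X$ to be smooth over $k$---equivalently regular, since every field of characteristic $0$ is perfect---which is implicit in the paper's usage (cf.\ Proposition~\ref{hlg}) and is necessary for the conclusion, since good reduction requires a smooth proper model.

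First, because $X$ is a proper scheme of finite presentation over $k=\operatorname{Frac}(A)$, standard limit arguments (EGA~IV) yield a nonzero $f\in A$, a nonempty open $U_{0}=\spec(A[1/f])\subseteq\spec(A)$, and a proper morphism $\mathfrak{X}_{0}\to U_{0}$ whose generic fiber is $X$. Next, the smooth locus $V\subseteq\mathfrak{X}_{0}$ of $\mathfrak{X}_{0}\to U_{0}$ is open, and it contains the entire generic fiber since $X\to\spec(k)$ is smooth. By properness, the image in $U_{0}$ of the closed complement $\mathfrak{X}_{0}\setminus V$ is closed; this image avoids the generic point of $U_{0}$, so because $U_{0}$ is a one-dimensional Dedekind scheme it consists of finitely many closed points $\mathfrak{p}_{1},\ldots,\mathfrak{p}_{r}$. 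Setting $U=U_{0}\setminus\{\mathfrak{p}_{1},\ldots,\mathfrak{p}_{r}\}$ and $\mathfrak{X}=\mathfrak{X}_{0}\times_{U_{0}}U$ makes $\mathfrak{X}\to U$ proper and smooth with generic fiber $X$.

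Finally, for every $\mathfrak{p}\in U$---and this excludes only finitely many primes of $\spec(A)$---base-changing $\mathfrak{X}\to U$ along $\spec(\mathcal{O}_{k_{\mathfrak{p}}})\to U$ yields a proper smooth morphism over $\spec(\mathcal{O}_{k_{\mathfrak{p}}})$ with generic fiber $X\times_{k}k_{\mathfrak{p}}$, which is exactly the required good reduction. The main obstacle is the middle step: one must use properness of $\mathfrak{X}_{0}\to U_{0}$ to upgrade the image of the (a priori only constructible) non-smooth locus to a \emph{closed}, and hence finite, subset of $U_{0}$. Everything else is routine spreading-out.
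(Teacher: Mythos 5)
Your argument is correct, and its decisive step --- openness of the smooth locus plus properness of the model, so that the non-smooth locus maps onto a closed, hence finite, subset of the base --- is exactly the step the paper's proof turns on. The only real divergence is how the integral model is produced. The paper uses the finite morphism $X\to\mathbf{P}_{k}^{1}$ and takes $Y$ to be the normalization of $\mathbf{P}_{A}^{1}$ in $R(X)$, giving an explicit proper model over all of $\spec(A)$ whose generic fibre is identified with $X$; note that this identification, just like your setup, tacitly requires $X$ to be normal (equivalently regular, hence smooth in characteristic $0$), so your explicit flagging of that hypothesis matches what the paper silently assumes. You instead invoke general spreading-out from EGA~IV to obtain a proper finitely presented model over some $\spec(A[1/f])$; this is more general (it would apply verbatim to a proper $k$-scheme of any dimension) at the cost of the limit machinery, whereas the paper's normalization construction is specific to curves but avoids it. After that both arguments coincide: discard the finitely many bad primes and base-change the smooth proper model to the (Henselization of the) local ring at each remaining $\mathfrak p$.
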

\begin{proof}
Let $L$ be the function field of $X$. Let $Y$ be the normalization of
 $\mathbf{P}_{A}^{1}$ in $L$.
Then, $Y\to \mathbf{P}_{A}^{1}$ is finite (\cite[I, Proposition
 1.1]{Me}). Hence, $Y\to \mathbf{P}_{A}^{1}$ is proper (\cite[I, Proposition 1.4]{Me}). Therefore,
$f: Y\to \spec{A}$
is proper, and so is $f^{\prime}: Y\times_{\spec{A}}\spec k\to \spec k$,
 which is the base change of $f$ by $\spec k\to \spec A$.
Since $Y\times_{\spec{A}}\spec k$ is regular, $f^{\prime}$ is smooth and
$Y\times_{\spec{A}}\spec k\simeq X$ %
.

Let $T$ be the set of elements in $Y$ that is smooth over
 $\spec{A}$. Then, the proper map $f$ takes the complement of $T$ to a closed
 set $B$ of $\spec{A}$. Since $f^{\prime}$ is smooth, $B$ is a proper
 subset of $\spec{A}$, and hence, it consists of finite elements.

Let
 $A_{\mathfrak p}$ be the localization of $A$ at $\mathfrak
 p\in\spec{A}\backslash B$. Then,
\begin{math}
Y\times_{\spec{A}}
\spec{A_{\mathfrak p}}
\to\spec{A_{\mathfrak p}},
\end{math}
which is the base change of $f$ by $\spec A_{\mathfrak p}\to \spec A$,
is proper and smooth. Therefore, the statement follows.
\end{proof}
%

%
\begin{lem}
\upshape
\label{kgl}
Let $k$ be an arbitrary field with notations $A, X$ as in Lemma
 \ref{gr}. Let $P$ be the set that contains almost all closed points of
 $X$. Let $\operatorname{kgl}(P)$ denote the kernel of the local-global map
\begin{equation*}
\operatorname{H}^{1}(R(X), \mathbf{Q}/\mathbf{Z})\to
\displaystyle\prod_{x\in P}
\operatorname{H}^{1}(\widetilde{R(X)_{x}}, \mathbf{Q}/\mathbf{Z}) .
\end{equation*}
Suppose that there exists
 a separable algebraic field extension $k_{i}$ over $k$ for $i\in I$ and that the
 natural homomorphism
\begin{equation*}
\label{asumi}
\operatorname{H}^{1}(k, \mathbf{Q}/\mathbf{Z})\to
\displaystyle\prod_{i \in I}
\operatorname{H}^{1}(k_{i}, \mathbf{Q}/\mathbf{Z})
\end{equation*}
is injective. Let $P_{i}$ be the inverse image of $P$ by
 $X\times_{k}k_{i}\to X$.

Then, $P_{i}$ contains almost all closed points of $X\otimes_{k}k_{i}$ and
 the homomorphism
\begin{equation}
\label{kp}
\operatorname{kgl}(P)\to
\displaystyle\prod_{i \in I}\operatorname{kgl}
(P_{i})
\end{equation}
is injective.
\end{lem}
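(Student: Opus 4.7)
The lemma has two assertions: that the preimages $P_{i}$ are again cofinite sets of closed points, and that the resulting map of kernels is injective.

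For the first assertion, the morphism $f_{i}\colon X\times_{k}k_{i}\to X$ is integral, being the base change of the integral map $\operatorname{Spec} k_{i}\to\operatorname{Spec} k$. For any closed point $x\in X$ the fiber $f_{i}^{-1}(x)=\operatorname{Spec}(\kappa(x)\otimes_{k}k_{i})$ is a $k_{i}$-algebra of finite dimension $[\kappa(x):k]$, hence a finite set of closed points of $X\times_{k}k_{i}$. Since $P$ omits only finitely many closed points of $X$ by hypothesis, its preimage $P_{i}=f_{i}^{-1}(P)$ likewise omits only finitely many closed points of $X\times_{k}k_{i}$.

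For the injectivity, I would first check the map is well-defined: for $\mathfrak{p}'\in P_{i}$ lying over $\mathfrak{p}\in P$, functoriality of Henselization yields an inclusion $\widetilde{R(X)}_{\mathfrak{p}}\hookrightarrow\widetilde{R(X\times_{k}k_{i})}_{\mathfrak{p}'}$, so restriction from $\operatorname{H}^{1}(R(X),\mathbf{Q}/\mathbf{Z})$ sends $\operatorname{kgl}(P)$ into $\operatorname{kgl}(P_{i})$. Next suppose $\chi\in\operatorname{kgl}(P)$ dies in every $\operatorname{kgl}(P_{i})$. After reducing to the case that $X$ is geometrically integral over $k$ (so that $X\times_{k}k_{i}$ and $X\times_{k}\overline{k}$ are integral), I fix embeddings $k_{i}\hookrightarrow\overline{k}$ and further restrict so that $\chi$ also vanishes on $R(X\times_{k}\overline{k})$. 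The Hochschild--Serre inflation-restriction sequence associated with
\begin{equation*}
1\to\operatorname{Gal}(\overline{R(X)}/R(X\times_{k}\overline{k}))\to\operatorname{Gal}(\overline{R(X)}/R(X))\to\operatorname{Gal}(\overline{k}/k)\to 1
\end{equation*}
reads
\begin{equation*}
0\to\operatorname{H}^{1}(k,\mathbf{Q}/\mathbf{Z})\to\operatorname{H}^{1}(R(X),\mathbf{Q}/\mathbf{Z})\to\operatorname{H}^{1}(R(X\times_{k}\overline{k}),\mathbf{Q}/\mathbf{Z}),
\end{equation*}
so $\chi$ is the inflation of a unique $\chi_{0}\in\operatorname{H}^{1}(k,\mathbf{Q}/\mathbf{Z})$.

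The analogous sequence over $k_{i}$ furnishes an injection $\operatorname{H}^{1}(k_{i},\mathbf{Q}/\mathbf{Z})\hookrightarrow\operatorname{H}^{1}(R(X\times_{k}k_{i}),\mathbf{Q}/\mathbf{Z})$. By functoriality the restriction of $\chi$ to $R(X\times_{k}k_{i})$ is the inflation of $\chi_{0}|_{k_{i}}$, which is zero by assumption, so $\chi_{0}|_{k_{i}}=0$ for every $i\in I$. The injectivity hypothesis then forces $\chi_{0}=0$, whence $\chi=0$. The main obstacle is the reduction to the geometrically integral case: the injectivity hypothesis is phrased for $k$ itself, while inflation-restriction as above requires $k$ to be separably closed in $R(X)$. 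Passing to the algebraic closure of $k$ in $R(X)$ (a finite extension) and verifying that the hypothesis descends along this extension is the one genuine technical point, everything else being standard functoriality of Galois cohomology.
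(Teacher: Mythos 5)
Your argument is essentially the paper's: both rest on the degree-one inflation--restriction (Hochschild--Serre) sequence identifying the classes killed by passage to $R(X\otimes_{k}\overline{k})$ with those inflated from $\operatorname{H}^{1}(k,\mathbf{Q}/\mathbf{Z})$ --- the paper phrases this as $0\to\operatorname{H}^{1}(k)\to\operatorname{H}^{1}(X)\to\operatorname{H}^{1}(X\times_{k}k_{s})^{G(k_{s}/k)}$ and runs the same diagram chase against the injectivity hypothesis on $\operatorname{H}^{1}(k)\to\prod_{i}\operatorname{H}^{1}(k_{i})$. The geometric-integrality point you flag is likewise left implicit (and unaddressed) in the paper's proof, and is harmless in its application, where $k$ is arranged to be algebraically closed in $R(X)$.
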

\begin{proof}
Let $K$ be a field; $K_{s}$, its separable closure; and $Z$, an algebraic
 curve over $K$.
Then,  $Z\times_{K} K_{s}\to Z$ is a Galois covering, and its Galois group is
$G(K_{s}/K)$. Then, there exists a Hochschild-Serre
spectral sequence
\begin{equation*}
\operatorname{H^{p}(G(K_{s}/K),
\operatorname{H}^{q}(Z\times_{K}K_{s}, \mathbf{Q}/\mathbf{Z}))\Rightarrow
\operatorname{H}^{n}(Z, \mathbf{Q}/\mathbf{Z})}
\end{equation*}
(cf, \cite[III, Theorem 2.20 and Remark 2.21]{Me}), and hence, we have an
 exact sequence
\begin{equation*}
0\to \operatorname{H}^{1}(K, \mathbf{Q}/\mathbf{Z})\to
\operatorname{H}^{1}(Z, \mathbf{Q}/\mathbf{Z})\to
\operatorname{H}^{1}(Z\times_{K}K_{s}, \mathbf{Q}/\mathbf{Z})^{G(K_{s}/K)}.
\end{equation*}
Therefore, the diagram
\begin{equation*}
\xymatrix{0\ar[r]
&\operatorname{H}^{1}(k,
 \mathbf{Q}/\mathbf{Z})\ar[r]\ar[d]
&\operatorname{H}^{1}(X,
 \mathbf{Q}/\mathbf{Z})\ar[r]\ar[d]
&\operatorname{H}^{1}(X\times_{k}k_{s},
 \mathbf{Q}/\mathbf{Z})^{G(k_{s}/k)}\ar[d]\\
0\ar[r]
&\displaystyle\prod_{i\in I}\operatorname{H}^{1}(k_{i},
\mathbf{Q}/\mathbf{Z})\ar[r]
&\displaystyle\prod_{i\in I}\operatorname{H}^{1}(X\times_{k}k_{i},
 \mathbf{Q}/\mathbf{Z})\ar[r]
&\displaystyle\prod_{i\in I}\operatorname{H}^{1}(X\times_{k}k_{s},
 \mathbf{Q}/\mathbf{Z})^{G(k_{s}/k_{i})}
}
\end{equation*}
is commutative. Then, the vertical map on the right-hand side is clearly
 injective, and so is that on the left-hand side by the assumption;
hence,
that of the middle,
\begin{equation}
\label{mgl}
\operatorname{H}^{1}(X, \mathbf{Q}/\mathbf{Z})\to
\prod_{i\in I}\operatorname{H}^{1}(X\times_{k}k_{i}, \mathbf{Q}/\mathbf{Z}),
\end{equation}
is injective. Since the homomorphism (\ref{kp}) is obtained by the
 commutative diagram
\begin{equation*}
\xymatrix{
\operatorname{H}^{1}(X, \mathbf{Q}/\mathbf{Z})
\ar[r]\ar[d]
&\operatorname{H}^{1}(X\times_{k}k_{i}, \mathbf{Q}/\mathbf{Z})\ar[d]\\
\displaystyle\prod_{y \in
 P}\operatorname{H}^{1}(\kappa(y),
 \mathbf{Q}/\mathbf{Z})\ar[r]
&\displaystyle\prod_{x \in P_{i}}
\operatorname{H}^{1}(\kappa(x), \mathbf{Q}/\mathbf{Z}),
}
\end{equation*}
the homomorphism (\ref{kp})
is injective.
\end{proof}
Let $F$ be a finitely generated field over $\mathbf{Q}$. Let the Kroneker
dimension of $F$ be
$n$. Then, there is a finitely generated field $F^{\prime}$ such that
$F^{\prime}$ is
algebraic closed in $F$
and the Kroneker dimension of $F^{\prime}$ is $n-1$.

Moreover, we have the regular
and proper algebraic curve $X$ over $F^{\prime}$ such that $R(X)=F$.
Hence, there exists an inclusion map $F\to F_{n, i}$, where $F_{n, i}$ is
an $n$-dimensional local field with a sequence of unramified extension
fields $k_{0}, \cdots k_{n}=F_{n, i}$ satisfying the conditions in
Definition \ref{DH}.

Then, the following main result follows from the above lemmas.
\begin{thm}
\upshape
\label{l-g-c}
\begin{enumerate}
\item
\label{m1}

Let $k$ be a finitely generated field over $\mathbf{Q}$. Let $X$ be
     a regular and proper algebraic curve over $k$. Let $P$ be the set
     that consists of almost all closed points of $X$.

Then, the local-global map of the character group
\begin{equation}
\label{HasseA}
\operatorname{H}^{1}\left(R(X), \mathbf{Q}/\mathbf{Z}\right)\to
\prod_{\mathfrak p\in
P}\operatorname{H}^{1}\left(\widetilde{R(X)}_{\mathfrak p}, \mathbf{Q}/\mathbf{Z}\right)
\end{equation}
is injective.
Hence, the homomorphism (\ref{HasseA}) is injective, regardless of whether $X$ is complete.
\item
\label{kn}

Moreover, let the Kronecker dimension of $k$ be $n$. Suppose that
the set
$\{k_{n,
     i}\}_{i\in P_{n}}$ is such that
an $n$-dimensional local field $k_{n, i}$ is derived from $k$ as shown above and
$X\times_{k}k_{n, i}$ has a good reduction in each step. Then, the local-global map
\begin{equation}
\label{hglh}
\operatorname{H}^{1}(k, \mathbf{Q}/\mathbf{Z})\to \prod_{i\in P_{n}}
\operatorname{H}^{1}(k_{n, i}, \mathbf{Q}/\mathbf{Z})
\end{equation}
is injective.
Moreover,
\begin{equation}
\label{HasseB}
\operatorname{H}^{1}(R(X), \mathbf{Q}/\mathbf{Z})\to \prod_{i\in P_{n}} \operatorname{H}^{1}(R(X\times
 k_{n, i}), \mathbf{Q}/\mathbf{Z})
\end{equation}
is injective.
\end{enumerate}
\end{thm}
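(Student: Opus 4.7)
The plan is to prove the theorem by induction on the Kronecker dimension $n$ of $k$, proving at each level the three statements in the order (ii) first part, (i), (ii) second part, each feeding into the next via Lemma \ref{kgl}, Lemma \ref{gr}, and Proposition \ref{hlg}. For the base case $n=1$, $k$ is a number field and the first assertion of (ii) is the classical Hasse principle for characters of a global field cited in the introduction, which refines to almost all finite places by Chebotarev. With this in hand, Lemma \ref{kgl} applied with $\{k_i\}=\{k_{1,i}\}$ embeds $\operatorname{kgl}(P)$ into $\prod_i \operatorname{kgl}(P_i)$; Lemma \ref{gr} gives good reduction of $X\times_k k_{1,i}$ for all but finitely many $i$, and Proposition \ref{hlg} forces each $\operatorname{kgl}(P_i)=0$, establishing (i). The second assertion of (ii) follows from the Hochschild--Serre sequence applied to $\spec R(X)\otimes_k k_s\to \spec R(X)$ (using geometric integrality of $X$), parallel to the middle injection (\ref{mgl}) proved inside Lemma \ref{kgl}.

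For the inductive step ($n\geq 2$), the preamble lets us write $k=R(X')$ with $X'/F'$ a regular proper curve and $F'$ of Kronecker dimension $n-1$. Applying the inductive (i) to $X'/F'$ embeds $H^1(k,\mathbf{Q}/\mathbf{Z})$ into $\prod_{\mathfrak p} H^1(\widetilde{R(X')}_{\mathfrak p},\mathbf{Q}/\mathbf{Z})$ over almost all closed points. Each $\widetilde{R(X')}_{\mathfrak p}$ is a Henselian discretely valued field with residue $\kappa(\mathfrak p)$ of Kronecker dimension $n-1$, and the Hochschild--Serre sequence $0\to H^1(\kappa(\mathfrak p),\mathbf{Q}/\mathbf{Z})\to H^1(\widetilde{R(X')}_{\mathfrak p},\mathbf{Q}/\mathbf{Z})\to \operatorname{Hom}(I,\mathbf{Q}/\mathbf{Z})^{G_{\kappa(\mathfrak p)}}$ attached to the inertia subgroup detects the unramified part through the inductive (ii) first part applied to $\kappa(\mathfrak p)$, and the ramified part by lifting each embedding $\kappa(\mathfrak p)\hookrightarrow \kappa(\mathfrak p)_{n-1,j}$ to an unramified extension of Henselian DVFs, producing the required $n$-dimensional local fields $k_{n,i}$. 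Iterating Lemma \ref{gr} along the tower arranges $X\times_k k_{n,i}$ to have good reduction in each step, establishing the first part of (ii). Then (i) follows from Lemma \ref{kgl} combined with Proposition \ref{hlg} exactly as in the base case, and the second part of (ii) from the same Hochschild--Serre argument.

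The main obstacle is the simultaneous treatment of the unramified and ramified parts of $H^1$ of the Henselian DVF $\widetilde{R(X')}_{\mathfrak p}$ in the inductive step: the inductive hypothesis supplies the residue tower for $\kappa(\mathfrak p)$, but one must lift this compatibly to a tower of Henselian DVFs producing $n$-dimensional local fields $k_{n,i}$ containing $\widetilde{R(X')}_{\mathfrak p}$, cover both pieces of the inertia short exact sequence, and simultaneously maintain good reduction of $X\times_k k_{n,i}$ at each step so that Proposition \ref{hlg} applies. Threading these constraints through the iterated Henselization tower --- in particular handling wild ramification in positive residue characteristic --- is the core technical issue.
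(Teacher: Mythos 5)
Your proposal follows essentially the same route as the paper: induction on the Kronecker dimension with the classical Hasse principle for number fields as the base case, deducing (i) from (ii) via Lemma \ref{kgl} together with Lemma \ref{gr} and Proposition \ref{hlg}, and deducing (\ref{hglh}) at the next level from (i) at the previous one, with (\ref{HasseB}) obtained from the Hochschild--Serre injection (\ref{mgl}). Your inertia-decomposition argument for passing from the Henselizations $\widetilde{R(X')}_{\mathfrak p}$ to the $n$-dimensional local fields $k_{n,i}$ is a fleshed-out version of the step the paper compresses into a citation of Lemma \ref{gr}, and the technical obstacle you flag is genuinely present but glossed over in the paper as well (note only that the residue fields in question contain $\mathbf{Q}$ at every stage before the final one, so wild ramification is less of an issue than you suggest, and that the paper's stated aim is to avoid Chebotarev in the base case).
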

\begin{proof}
\upshape
We prove the statement by induction. Suppose that $k$ is an
 algebraic number field and $X$ is the ring of integers of $k$. Then, the homomorphism
 (\ref{HasseA}) is injective by \cite[p.180, 8.8 Corollary]{C-F}. When (i)
 holds for  $k_{n}$ with the Kronecker dimension $n\ge 1$ , that is, the
 homomorphism (\ref{HasseA}) is injective, we see that
 (\ref{hglh}) is injective by Lemma \ref{gr}. Let $Z$ be a regular scheme. Then,
\begin{equation}
\label{etx}
0\to
\operatorname{H}^{1}(Z, \mathbf{Q}/\mathbf{Z})\to
\operatorname{H}^{1}(R(Z), \mathbf{Q}/\mathbf{Z})\to
\displaystyle\prod_{\mathfrak p\in Z_{(1)}}
\operatorname{H}^{1}(R(\spec(\mathcal{O}_{Z, \bar{\mathfrak p}})), \mathbf{Q}/\mathbf{Z})
\end{equation}
is an exact sequence. Since the sequence (\ref{etx}) is exact and the
 homomorphism (\ref{mgl}) in the proof of Lemma \ref{kgl} is injective, we see that
 the homomorphism (\ref{HasseB}) is injective, that is, (ii) holds.

When (ii) holds for $k_{n}$, we see that (i) holds for $k_{n}$ as
 follows. Let $\{\widetilde{k_{i}}\}_{i\in I}$ be the set of $n$-dimensional local
 fields that satisfy the assumption of (ii). Then, $\{\widetilde{k_{i}}\}_{i\in I}$ satisfy
 the assumption of Lemma \ref{kgl} by (\ref{hglh}). Hence, it is
 sufficient to show that $\operatorname{kgl}(P_{i})=0$ to show (i). It follows from Proposition
 \ref{hlg}. Therefore, the proof is complete.
\end{proof}
\begin{rem}
\upshape
A method of algebraic geometry enables us to prove Theorem \ref{l-g-c} by
 induction. Theorem \ref{l-g-c}
 follows from the Hasse principle for the character group of a global
 field, and it is proved by an algebraic method (c.f. \cite[p.180, 8.8 Corollary]{C-F}). Therefore, Theorem
 \ref{l-g-c} is proved by an algebraic method.
\end{rem}
By applying the main result, we obtain the following result.
\begin{cor}
\label{cap}
\upshape
Let $\mathcal{X}$ be a connected normal scheme of finite type over
 $\spec(\mathbf{Z})$ where the characteristic of $\mathcal{X}$ is $0$.
 Suppose that $i\le\operatorname{dim}(\mathcal{X})$. Then,
\begin{equation}
\label{ahp}
\operatorname{H}^{1}(\mathcal{X}, \mathbf{Q}/\mathbf{Z})\to
\prod_{\mathfrak p\in\mathcal{X}_{(i)}}
\operatorname{H}^{1}(\kappa(\mathfrak p), \mathbf{Q}/\mathbf{Z})
\end{equation}
is injective. The statement for $i=\operatorname{dim}(\mathcal{X})$ is
 equivalent to the following:

If $\mathcal{Y}\to \mathcal{X}$ is a connected
 c.s. covering, $\mathcal{Y}\to \mathcal{X}$ is an isomorphism.
\end{cor}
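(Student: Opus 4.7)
My plan is to prove Corollary \ref{cap} by induction on $d = \dim(\mathcal{X})$, and within each dimension to handle the cases in the order $i = 0$, then $i = 1$, then $1 < i \le d$. The case $i = 0$ is immediate: $\mathcal{X}_{(0)}$ consists of the generic point $\eta$ with $\kappa(\eta) = R(\mathcal{X})$, and $\operatorname{H}^{1}(\mathcal{X}, \mathbf{Q}/\mathbf{Z})$ embeds into $\operatorname{H}^{1}(R(\mathcal{X}), \mathbf{Q}/\mathbf{Z})$ because $\mathcal{X}$ is normal and connected.

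For the case $i = 1$ (which includes the base case $d = 1$) I would realize $R(\mathcal{X})$ as the function field $R(X)$ of a regular connected curve $X$ over a finitely generated subfield $k \subset R(\mathcal{X})$ of Kronecker dimension $d$. Concretely, apply Noether normalization to an affine open dense subscheme of $\mathcal{X}$ and then project one coordinate, producing a dominant morphism $\mathcal{X} \to \mathcal{B}$ with $\dim \mathcal{B} = d - 1$ whose generic fiber is $X$. Each closed point $\mathfrak{p}$ of $X$ corresponds to a horizontal codimension-$1$ point $\mathfrak{p}^{\mathcal{X}}$ of $\mathcal{X}$ with $\mathcal{O}_{X, \mathfrak{p}} = \mathcal{O}_{\mathcal{X}, \mathfrak{p}^{\mathcal{X}}}$, whence $\widetilde{R(X)}_{\mathfrak{p}} = \widetilde{R(\mathcal{X})}_{\mathfrak{p}^{\mathcal{X}}}$. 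For $\chi \in \operatorname{H}^{1}(\mathcal{X}, \mathbf{Q}/\mathbf{Z})$ trivial on every codimension-$1$ residue field, the identity displayed just before Lemma \ref{inh} converts this to triviality of the induced character of $R(\mathcal{X})$ on $\widetilde{R(\mathcal{X})}_{\mathfrak{q}}$ for every $\mathfrak{q} \in \mathcal{X}_{(1)}$; in particular it is trivial on $\widetilde{R(X)}_{\mathfrak{p}}$ for every closed $\mathfrak{p}$ of $X$. Theorem \ref{l-g-c}(i) applied to $X/k$ then forces $\chi = 0$. In the base case $d = 1$, $\mathcal{X}$ itself plays the role of $X$ over its field of constants (a number field), and the argument specializes accordingly.

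For $1 < i \le d$ in the inductive step I would reduce to the case $i = 1$ by subscheme induction. For each $\mathfrak{q} \in \mathcal{X}_{(1)}$, let $\tilde{Z}$ be the normalization of the closed integral subscheme $\overline{\{\mathfrak{q}\}}$: this is a connected normal scheme of dimension $d - 1$, of finite type over $\spec(\mathbf{Z})$, and of characteristic $0$, so the inductive hypothesis applies to it. By catenarity a codimension-$(i-1)$ point $\mathfrak{p}'$ of $\tilde{Z}$ maps to a codimension-$i$ point $\mathfrak{p}$ of $\mathcal{X}$ contained in $\overline{\{\mathfrak{q}\}}$, and by functoriality of pullback $\chi|_{\kappa(\mathfrak{p}')} = 0$ since $\chi|_{\kappa(\mathfrak{p})} = 0$ by hypothesis. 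The inductive hypothesis applied to $\tilde{Z}$ at codimension level $i - 1 \le d - 1$ yields $\chi|_{\tilde{Z}} = 0$, hence $\chi|_{\kappa(\mathfrak{q})} = \chi|_{R(\tilde{Z})} = 0$ since $R(\tilde{Z}) = \kappa(\mathfrak{q})$. Thus the hypothesis for $i = 1$ on $\mathcal{X}$ is satisfied, and the case above completes the proof. The main obstacle is the matching step in the $i = 1$ case: verifying that the Noether-normalization construction produces a curve $X$ whose closed points correspond, with matching henselization fraction fields, to horizontal codimension-$1$ points of $\mathcal{X}$; a secondary subtlety is the codimension bookkeeping in the subscheme-induction step, which rests on catenarity of finite-type $\spec(\mathbf{Z})$-schemes and on the finite normalization map preserving codimension.
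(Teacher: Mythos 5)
Your proposal follows the paper's own proof in all essentials: the case $i=1$ is handled by fibering an affine open $\spec(A)$ of $\mathcal{X}$ over a base $\spec(B)$ of dimension $d-1$, identifying closed points of the generic-fiber curve $X$ with codimension-one points of $\mathcal{X}$ having the same henselized local rings, and invoking Theorem \ref{l-g-c}(i) together with the identity displayed before Lemma \ref{inh}; the cases $1<i\le d$ are handled by passing to the finite normalization of $\overline{\{\mathfrak q\}}$ for $\mathfrak q\in\mathcal{X}_{(1)}$ and descending to codimension $i-1$. Organizing this as a double induction on $(d,i)$ rather than the paper's ``repeat the above argument'' is cosmetic.

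There is, however, one concrete false assertion in your subscheme-induction step: the normalization $\tilde Z$ of $\overline{\{\mathfrak q\}}$ is \emph{not} of characteristic $0$ for every $\mathfrak q\in\mathcal{X}_{(1)}$. If $\mathfrak q$ is a vertical divisor, lying over a closed point $(p)$ of $\spec(\mathbf{Z})$ (for instance $\mathfrak q=(p)$ in $\mathcal{X}=\spec(\mathbf{Z}[x])$), then $\tilde Z$ is a variety over a finite field; the inductive hypothesis does not apply to it, and Theorem \ref{l-g-c}, which concerns fields finitely generated over $\mathbf{Q}$, says nothing about its function field. The paper's own proof is silent on the same point when it asserts that Corollary \ref{cap} holds for $\spec A^{\mathfrak p}$ with $i=1$, so you are no worse off than the author, but as written your characteristic-zero claim is simply wrong. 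The repair is available inside your own argument: your $i=1$ case actually proves the stronger statement that triviality of $\chi$ at the codimension-one points dominating $\spec(B)$ (the closed points of the generic fiber $X$) already forces $\chi=0$. Hence in the descent from $i$ to $i-1$ you only need $\chi|_{\kappa(\mathfrak q)}=0$ for such $\mathfrak q$, and for those $\kappa(\mathfrak q)$ has characteristic $0$, so $\tilde Z$ genuinely satisfies the hypotheses of the induction. With that restriction inserted the proof closes up and coincides with the paper's. (Two minor points: the base field $k=R(B)$ has Kronecker dimension $d-1$, not $d$, though nothing depends on this; and, like the paper, you do not address the final equivalence with the c.s.-covering formulation, which follows directly from Definition \ref{CS} and normality of $\mathcal{X}$.)
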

\begin{proof}
Let $\spec(A)$ be an open affine scheme of $\mathcal{X}$.
Let $\operatorname{f}:\mathbf{Z}\to A$ be a ring homomorphism that
 corresponds to a morphism of schemes
\begin{equation*}
\spec(A)\subset\mathcal{X}\to\spec(\mathbf{Z}).
\end{equation*}
Suppose that $R(\mathcal{X})$ is an algebraic function field in one
 variable over $K$. Then, we have a normal ring $B$ with $R(B)=K$ and
 $\operatorname{g}: B\to A$, which is an extension of
 $\operatorname{f}$. Since $\spec(A)\otimes_{\spec(B)}K$ is normal,
 Theorem \ref{l-g-c} holds in this case, and the homomorphism (\ref{ahp})
 is injective for $i=1$.

Moreover, let $A^{\mathfrak p}$ be the
 normalization of $A/\mathfrak p$. Since
\begin{math}
\spec(A^{\mathfrak p})\to\spec(A/\mathfrak p)
\end{math}
is finite by \cite[Proposition (7.8.6)]{EGA}, $A^{\mathfrak p}$ is a
 normal scheme of finite type over $\mathbf{Z}$. In addition, the homomorphism
\begin{math}
\operatorname{H}^{1}(\mathcal{X}, \mathbf{Q}/\mathbf{Z})\to
\operatorname{H}^{1}(\kappa(\mathfrak p), \mathbf{Q}/\mathbf{Z})
\end{math}
 goes through $\operatorname{H}^{1}(\spec A^{\mathfrak p},
 \mathbf{Q}/\mathbf{Z})$, and
\begin{math}
\operatorname{H}^{1}(\spec A^{\mathfrak p}, \mathbf{Q}/\mathbf{Z})
\to
\operatorname{H}^{1}(\kappa(\mathfrak p), \mathbf{Q}/\mathbf{Z})
\end{math}
is injective. Now, Corollary
 \ref{cap} holds for $\spec A^{\mathfrak p}$ and $i=1$. Therefore, it holds for
 $\mathcal{X}$ and $i=2$. By repeating the above argument,
 we can see that the statement holds for any $i\le \operatorname{dim}(\mathcal{X})$.
\end{proof}


\begin{thebibliography}{99}
\bibitem{A}
\textsc{M. Artin},
\textit{Algebraic approximation of structures over complete local
              rings},
Inst. Hautes \'Etudes Sci. Publ. Math.,
\textbf{36},
(1969),
23--58.
\bibitem{C-F}
\textsc{J.W.S. Cassels and A. Fr\"{o}hlich},
\textit{Algebraic number theory},
Academic Press,
London,
New York,
1967.
\bibitem{EGA}
\textsc{A. Grothendieck},
\textit{\'{E}l\'ements de g\'eom\'etrie alg\'ebrique. {IV}. \'{E}tude
              locale des sch\'emas et des morphismes de sch\'emas. {II}},
Inst. Hautes \'Etudes Sci. Publ. Math.
\textbf{24}
 (1965).
\bibitem{Me}
\textsc{J.~Milne},
\textit{\'{E}tale Cohomology},
Princeton Univ. Press,
Princeton,
1980.
\bibitem{JS}
\textsc{U.~Jannsen and S.~Saito},
\textit{Kato homology of arithmetic schemes
and higher class field theory over local fields},
Doc. Math. 2003,
Extra Vol.,
479-538 (electronic).
\bibitem{K1}
\textsc{K. Kato},
\textit{A generalization of local class field theory by using
	    {$K$}-groups, {II}},
J. Fac. Sci. Univ. Tokyo Sect. IA Math. \textbf{27}
(1980),
603--683.
\bibitem{K2}
\textsc{K. Kato},
\textit{Milnor {$K$}-theory and the Chow group of zero cycles}
(Boulder, CO, 1983),
Contemp. Math. \textbf{55},
Amer. Math. Soc.,
Providence,
1986,
pp. 241--253 .
\bibitem{K3}
\textsc{K. Kato},
Existence theorem for higher local fields, in
\textit{Invitation to Higher Local Fields},
eds. I. Fesenko and M. Kurihara,
Geometry and Topology Monographs, Vol. 3 (University of Warwick, 2000),
pp. 165--195.

\bibitem{S1}
\textsc{S. Saito},
\textit{Class field theory for curves over local fields},
Journal of Number Theory, (1) \textbf{21} (1985),
44--80.
\bibitem{S2}
\textsc{S. Saito},
\textit{Unramified class field theory of arithmetical schemes},
Ann. of Math. (2) \textbf{121}(1985),
251--281.
\end{thebibliography}
\end{document}